\newtheorem{theorem}{Theorem}[section]
\newtheorem{lemma}[theorem]{Lemma}
\newtheorem{proposition}[theorem]{Proposition}
\newtheorem{remark}[theorem]{Remark}
\numberwithin{equation}{section}
\newcommand{\PZ}{\mathbb{Z}_+}
\newcommand{\R}{\mathbb{R}}
\newcommand{\e}{\mathrm{e}}
\newcommand{\fl}{\mathfrak{l}}
\newcommand{\fg}{\mathfrak{g}}
\newcommand{\Id}{\mathrm{Id}}
\newcommand{\idct}{\mathbf{1}}
\newcommand{\cH}{\mathcal{H}}
\newcommand{\arcosh}{\mathop{\mathrm{arcosh}}}
\newcommand{\0}{\mathbf{0}}
\newcommand{\bc}{\mathbf{c}}
\newcommand{\F}{\mathscr{F}}
\renewcommand{\C}{\mathbb{C}}
\newcommand{\Z}{\mathbb{Z}}
\renewcommand{\Re}{\mathop{\mathrm{Re}}}
\renewcommand{\SS}{\mathbb{S}}
\renewcommand{\H}{\mathbb{H}}
\renewcommand{\i}{\mathrm{i}}
\renewcommand{\d}{\mathrm{d}}
\renewcommand{\L}{\mathcal{L}_{*}}
\newcommand{\SO}{\mathrm{SO}}
\newcommand{\df}[1]{\,\textrm{d}#1}
\def\@tocline#1#2#3#4#5#6#7{\relax
\ifnum #1>\c@tocdepth 
\else
\par \addpenalty\@secpenalty\addvspace{#2}%
\begingroup \hyphenpenalty\@M
\@ifempty{#4}{%
\@tempdima\csname r@tocindent\number#1\endcsname\relax
}{%
\@tempdima#4\relax
}%
\parindent\z@ \leftskip#3\relax
\advance\leftskip\@tempdima\relax
\rightskip\@pnumwidth plus4em \parfillskip-\@pnumwidth
#5\leavevmode\hskip-\@tempdima
\ifcase #1
\or\or \hskip 2em \or \hskip 2em \else \hskip 3em \fi%
#6\nobreak\relax
\dotfill\hbox to\@pnumwidth{\@tocpagenum{#7}}\par
\nobreak
\endgroup
\fi}
\begin{document}
\title[Lacunary Hyperbolic Spherical Maximal Operators]{Lacunary Spherical Maximal Operators \\ on Hyperbolic Spaces}

\author[Y. Wang, H.-W. Zhang]{Yunxiang Wang, Hong-Wei Zhang}

\begin{abstract}
We prove that the lacunary spherical maximal operator, defined on the $n$-dimensional real hyperbolic space, is bounded on $L^p(\H^n)$ for all $n\ge2$ and $1<p\le\infty$. In particular, the lacunary set is significantly larger than its Euclidean counterpart, reflecting the influence of the geometry at infinity of the hyperbolic space.
\end{abstract}

\keywords{Lacunary spherical maximal operator, Lacunary set, Spectral multiplier, Real hyperbolic space}

\makeatletter
\@namedef{subjclassname@2020}{\textnormal{2020}
\it{Mathematics Subject Classification}}
\makeatother
\subjclass[2020]{42B25, 43A85, 22E30, 43A90}

\maketitle
\section{Introduction}
The study of maximal operators has long been central to harmonic analysis. The Hardy-Littlewood maximal operator $M_*$, defined as the supremum of the averages of a locally integrable function $f$ over balls $B(x,t)$ centered at a given point $x\in\R^n$:
\begin{align*}
M_{*}(f)(x) 
= \sup_{t > 0} \frac{1}{|B(x,t)|} 
\int_{B(x,t)} |f(y)| \df{y},
\end{align*}
plays a crucial role in establishing pointwise convergence results. It is well-known that $M_{*}$ is bounded on $L^p(\R^n)$ for all $p>1$.

In the 1970s, Stein introduced the spherical maximal operator, which deals with spheres of varying radius instead of averaging over balls:
\begin{align}\label{Eq:1.001}
S_* (f)(x) 
= \sup_{t>0}
|(f*\!\df{\sigma_t})(x)|
= \sup_{t>0} 
\left|\int_{\SS^{n-1}}f(x-ty) \df{\sigma(y)}\right|.
\end{align}
Here $\df{\sigma}$ and $\df{\sigma_t}$ are the normalized surface measures of the unit sphere $\SS^{n-1}$ and the sphere of radius $t$, respectively. Such an operator plays a key role in the pointwise convergence problem of wave propagators. Unlike the Hardy-Littlewood maximal operator, the spherical maximal operator $S_*$ is $L^p$-bounded only for $p>n/(n-1)$, and this threshold is sharp. Such a result was proved by Stein \cite{Ste76} for $n\ge3$,  and the case of $n=2$ was later completed by Bourgain \cite{Bou86}.

Since then, the study of variants of maximal operators and spherical maximal operators averaging over different geometric settings has attracted much interest. See, for instance, \cite{Cor77,CW78,SW78,Cal79,Bou91,Sog91,SW11,PR15,Lac19,LWZ23,CK24,KO24}. Among these contributions, the lacunary spherical maximal operator has gained considerable attention for its refined $L^p$-boundedness properties.

The \textit{lacunary spherical maximal operator}, introduced by Coifman and Weiss \cite{CW78} and Calder\'on \cite{Cal79}, takes the supremum of spherical averages over the lacunary set $\Lambda_{\R}=\lbrace{2^{-j}\mid j\in\Z_+}\rbrace$:
\begin{align}\label{def: lac Rn}
S_{*}^{\Lambda_{\R}} (f)(x)
= \sup_{t\in\Lambda_{\R}}
|(f*\!\df{\sigma_t})(x)|
= \sup_{j\in\Z_+} 
\left|\int_{\SS^{n-1}}f(x-2^{-j}y) \df{\sigma(y)}\right|.
\end{align}
This lacunary version is  $L^p$-bounded for all  $p>1$, which improves the above $L^p$-boundedness results of the spherical maximum operator $S_*$.

Beyond the Euclidean setting, maximal operators have been extensively studied on Heisenberg groups and more general two-step nilpotent Lie groups. See, for instance, \cite{NT97,MS04,NT04,Ver06,GT21,RSS23}. In particular, it was recently shown by Ganguly and Thangavelu \cite{GT21} that the lacunary spherical maximal operator, with a lacunary set similar to $\Lambda_{\R}$, is $L^p$-bounded for all $p>1$. Recall that dilation structures can be defined in these settings.

We are interested in maximal operators defined on the real hyperbolic space $\H^n$ ($n\ge2$), an $n$-dimensional Riemannian manifold with sectional curvature $-1$ that grows exponentially fast at infinity. Therefore, balls in $\H^n$ do not satisfy the doubling property, and certain Euclidean techniques may fail in this setting. Despite the significantly different geometric properties, it has been shown that maximal operators defined on $\H^n$ exhibit similar $L^p$-boundedness behavior to those on $\R^n$: Clerc and Stein \cite{CS74} and Str\"{o}mberg \cite{Str81} demonstrated that the Hardy-Littlewood maximal operator is bounded from $L^p(\H^n)$ to $L^p(\H^n)$ for all $p > 1$ and from $L^1(\H^n)$ to $L^{1,\infty}(\H^n)$, respectively; later, El Kohen \cite{ElK80} and Ionescu \cite{Ion00} showed that the spherical maximal operator is bounded from $L^p(\H^n)$ to $L^p(\H^n)$ for all $p>n/(n-1)$. See also \cite{NS97,Nev98,MNS00}, where the maximal operator is studied from the perspective of ergodic theory in more general symmetric spaces.

This paper aims to define the lacunary counterpart to the spherical maximal operator on $\H^n$ and to study its $L^p$-boundedness. To begin with, we define the \emph{lacunary set} in $\H^n$
\begin{align}\label{eq:deflac}
\Lambda=\lbrace{2^{-j}\mid j\in\Z_+}\rbrace\cup\Z_{+}.
\end{align}
Note that $\Lambda$ is significantly larger than $\Lambda_\R$ defined previously (see Figure \ref{fig:comlac} for comparison). Moreover, in the Euclidean setting, the maximal operator associated with the lacunary set $\Lambda$ is no longer $L^p$-bounded for $1<p \le n(n-1)$, see Remark \ref{rmk: 1.2}. This difference reflects the influence of the geometry at infinity of $\mathbb{H}^n$, which yields exponential decay in the estimates for both the kernel and the Fourier multiplier.

\begin{figure}
\centering

\begin{tikzpicture}
\begin{scope}[xshift=0cm]
\fill (0.05,0) circle (0.2mm);
\fill (0.1,0) circle (0.2mm);
\fill (0.15,0) circle (0.2mm);
\draw[line width=0.3mm] (0.2,0) -- (8.5,0);
\fill (8.6,0) circle (0.2mm);
\fill (8.7,0) circle (0.2mm);
\fill (8.8,0) circle (0.2mm);
\draw[line width=0.3mm] (0,-0.1) node[below] {$0$} -- (0,0.1);
\fill (0.25,0) node[below] {$\frac14$} circle (0.5mm);
\fill (0.5,0) node[below] {$\frac12$} circle (0.5mm);
\fill (1,0) node[below] {$1$} circle (0.5mm);
\fill (2,0) node[below] {$2$} circle (0.5mm);
\fill (3,0) node[below] {$3$} circle (0.5mm);
\fill (4,0) node[below] {$4$} circle (0.5mm);
\fill (5,0) node[below] {$5$} circle (0.5mm);
\fill (6,0) node[below] {$6$} circle (0.5mm);
\fill (7,0) node[below] {$7$} circle (0.5mm);
\fill (8,0) node[below] {$8$} circle (0.5mm);
\node[right] at (9,0) {$\Lambda$};
\end{scope}

\begin{scope}[yshift=-1cm]
\fill (0.05,0) circle (0.2mm);
\fill (0.1,0) circle (0.2mm);
\fill (0.15,0) circle (0.2mm);
\draw[line width=0.3mm] (0.2,0) -- (8.5,0);
\fill (8.6,0) circle (0.2mm);
\fill (8.7,0) circle (0.2mm);
\fill (8.8,0) circle (0.2mm);
\draw[line width=0.3mm] (0,-0.1) node[below] {$0$} -- (0,0.1);
\fill (0.25,0) node[below] {$\frac14$} circle (0.5mm);
\fill (0.5,0) node[below] {$\frac12$} circle (0.5mm);
\fill (1,0) node[below] {$1$} circle (0.5mm);
\fill (2,0) node[below] {$2$} circle (0.5mm);
\fill (4,0) node[below] {$4$} circle (0.5mm);
\fill (8,0) node[below] {$8$} circle (0.5mm);
\node[right] at (9,0) {$\Lambda_{\R}$};
\end{scope}

\end{tikzpicture}
\caption{Comparison between $\Lambda$ and $\Lambda_{\R}$.}
\label{fig:comlac}
\end{figure}

The main object of this paper is the \emph{lacunary spherical maximal operator $\L$ on $\H^n$}, defined by
\begin{align}\label{def: lac}
\L (f)(x)
= \sup_{t\in\Lambda} |(f*\!\df{\omega_t})(x)|,
\end{align}
where $\df{\omega}_t$ be the normalized spherical measure of $\H^n$. See Section \ref{Sec:2} for a detailed explanation of these notations and a more precise expression of the operator $\L$. Our main result is as follows.

\begin{theorem}\label{thm: main}
Let $\H^n$ be a real hyperbolic space of dimension $n\ge2$. Then the lacunary spherical maximal operator $\L$, defined by \eqref{def: lac}, is bounded from $L^p(\H^n)$ to $L^p(\H^n)$ for all $1<p\le\infty$.
\end{theorem}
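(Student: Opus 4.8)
The plan is to pass to the spherical (Helgason--Fourier) transform on $\H^n$, under which each spherical average $A_t f = f*\df{\omega_t}$ becomes the radial Fourier multiplier $\varphi_\lambda(t)$, the elementary spherical function of index $\lambda$. Since every $\df{\omega_t}$ is a probability measure, $\L$ is bounded on $L^\infty$ by the trivial estimate $|A_t f|\le\|f\|_\infty$; by interpolation it therefore suffices to establish boundedness on $L^p$ for $1<p\le2$ and then interpolate with $L^\infty$ to reach $2<p<\infty$. I would split the lacunary set as $\Lambda=\Lambda_0\cup\Lambda_\infty$ with $\Lambda_0=\{2^{-j}:j\in\Z_+\}$ (the local scales, accumulating at the origin) and $\Lambda_\infty=\Z_+$ (the global scales, escaping to infinity), and bound $\L\le\L^{\mathrm{loc}}+\L^{\mathrm{glob}}$ accordingly. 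The two halves exploit completely different mechanisms.

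The global half is where the geometry at infinity does the work. Writing $\rho=(n-1)/2$, the Harish-Chandra asymptotics $\varphi_\lambda(t)\sim c(\lambda)\,\e^{(\i\lambda-\rho)t}+c(-\lambda)\,\e^{(-\i\lambda-\rho)t}$ together with holomorphic extension to the strip $|\Im\lambda|\le\rho|2/p-1|$ (equivalently the Kunze--Stein/Herz majorization principle) yield, for each $1<p<\infty$, the operator-norm bound $\|A_k\|_{L^p\to L^p}\lesssim(1+k)\,\e^{-\delta_p k}$ with $\delta_p=(n-1)/\max(p,p')>0$; at $p=2$ this is simply $\sup_{\lambda\in\R}|\varphi_\lambda(k)|=\varphi_0(k)\lesssim(1+k)\e^{-\rho k}$, and $p>2$ follows from $p<2$ by the self-adjointness of $A_k$. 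Consequently $\L^{\mathrm{glob}} f\le\sum_{k\ge1}|A_k f|$ gives $\|\L^{\mathrm{glob}} f\|_p\lesssim\big(\sum_{k\ge1}(1+k)\e^{-\delta_p k}\big)\|f\|_p\lesssim\|f\|_p$. It is precisely this exponential decay of the operator norms --- absent in the Euclidean setting --- that lets the sum converge over the full arithmetic progression $\Z_+$ rather than merely over a geometric sequence, which is the quantitative form of the claim that the hyperbolic lacunary set may be taken much larger.

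The local half is modelled on the classical Coifman--Weiss/Calder\'on argument. For $t=2^{-j}\le1$ the factor $\e^{-\rho t}$ is harmless and $\varphi_\lambda$ exhibits the Euclidean stationary-phase decay $|\varphi_\lambda(2^{-j})|\lesssim(1+|\lambda|2^{-j})^{-(n-1)/2}$. I would subtract from $A_{2^{-j}}$ a smooth (heat- or Poisson-regularized) average $E_{2^{-j}}$ whose maximal function is dominated pointwise by the Hardy--Littlewood operator $M_*$, bounded on $L^p$ for all $p>1$ by Clerc--Stein and Str\"omberg; the remainder $\sup_j|(A_{2^{-j}}-E_{2^{-j}})f|$ is then controlled by the square function $(\sum_j|(A_{2^{-j}}-E_{2^{-j}})f|^2)^{1/2}$. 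On $L^2$ this reduces, via Plancherel, to the uniform bound $\sum_j|\varphi_\lambda(2^{-j})-\widehat{E_{2^{-j}}}(\lambda)|^2\lesssim1$ --- the subtraction annihilates the low frequencies where $\varphi_\lambda\to1$, the stationary-phase decay handles the high frequencies, and the lacunarity of $\{2^{-j}\}$ makes the resulting geometric series converge. For $1<p<2$ I would upgrade this square function by vector-valued Calder\'on--Zygmund theory, which applies because at small scales $\H^n$ is a space of homogeneous type (the doubling property holds locally) and the regularized kernels satisfy the requisite H\"ormander conditions.

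The principal obstacle, I expect, is the range $1<p<2$ together with the non-doubling geometry, and in particular the global operator-norm estimate: the global square function is not even bounded on $L^1$ (its associated $\ell^2$-valued kernel fails to be integrable, since each $\df{\omega_k}$ carries unit mass), so for $p<2$ one genuinely cannot mimic the local argument and must instead extract the exponential decay of $\|A_k\|_{L^p\to L^p}$ directly from the holomorphic extension of $\varphi_\lambda(k)$ for a \emph{singular} spherical measure --- this is the substantive hyperbolic input. A secondary but pervasive technical point is securing spherical-function estimates that are uniform in the joint range of $\lambda$ and $t$ and interpolate cleanly between the oscillatory Euclidean regime at small $t$ and the exponential regime at large $t$; these underlie both halves of the argument. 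Assembling the local and global bounds for $1<p\le2$ and interpolating with the trivial $L^\infty$ estimate then yields Theorem~\ref{thm: main} for all $1<p\le\infty$.
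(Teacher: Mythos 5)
Your splitting of $\Lambda$ into the local scales $\{2^{-j}\}$ and the global scales $\Z_+$ is exactly the paper's decomposition, and your global half is correct: Herz's majoration principle (Kunze--Stein) applies to the positive bi-$K$-invariant measures $\df{\omega_k}$ and gives $\|f*\df{\omega_k}\|_p \lesssim (1+k)\,\e^{-(n-1)k/p'}\|f\|_p$ for $1<p\le 2$, so the operator norms sum over $k\in\Z_+$. This is the same mechanism as the paper's Lemma \ref{Lem:2.4} and Proposition \ref{prop:3.2}, and at $\alpha=0$ your version is if anything cleaner, since Herz applies directly to the singular probability measure without the paper's kernel splitting. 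The $L^2$ part of your local argument (heat regularization plus Plancherel) likewise matches Proposition \ref{prop:3.3}.

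The gap is in the local half for $1<p<2$, precisely the step you treat as routine. You propose to run vector-valued Calder\'on--Zygmund theory on the square function $\bigl(\sum_j|(A_{2^{-j}}-E_{2^{-j}})f|^2\bigr)^{1/2}$, asserting that ``the regularized kernels satisfy the requisite H\"ormander conditions.'' But the kernels of $A_{2^{-j}}-E_{2^{-j}}$ are not regularized: each still contains the singular measure $\df{\omega_{2^{-j}}}$, and the $\ell^2$-valued H\"ormander condition fails for the family $\{\df{\omega_{2^{-j}}}-E_{2^{-j}}\}_j$. Concretely, if $y,y'$ are centers with $d(y,y')\approx\delta$, then for every scale $2^{-j}\in[2\delta,1]$ the spherical measures centered at $y$ and at $y'$ are mutually singular (they live on distinct spheres) and each has mass $1$, so their difference contributes total variation $\approx 1$ to the region outside $B(y',2\delta)$, with no gain from subtracting the smooth $E_{2^{-j}}$; summing over the $\approx\log_2(1/\delta)$ admissible scales, the H\"ormander integral is $\gtrsim \log(1/\delta)$, hence unbounded. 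This is not a removable technicality: if the condition held, the same argument would yield weak $(1,1)$ for the Euclidean lacunary spherical maximal operator, a well-known open problem.

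This obstruction is exactly what forces the paper's detour through the analytic family $\L^{\alpha}$. For $\Re\alpha>0$ the kernel of $m^{\alpha}_t(D)$ is the integrable function $K^{\alpha}_t(r)\approx t^{-n}(1-r/t)^{\Re\alpha-1}\idct_{[0,t]}(r)$, and the Calder\'on--Zygmund estimate in Proposition \ref{prop:3.4} produces per-scale bounds of size $(2^jr_l)^{\Re\alpha}$, which sum geometrically over $2^j\le r_l^{-1}$ precisely because $\Re\alpha>0$; at $\alpha=0$ each term is $O(1)$ and the sum is $\log(1/r_l)$, reproducing the divergence above. The weak $(1,1)$ bound is therefore proved only for $\Re\alpha>0$, and the case $\alpha=0$ that Theorem \ref{thm: main} needs is recovered by Stein's interpolation for the analytic family $\{\L^{\alpha}\}$ against the $L^2$ bound, which holds in the wider range $\Re\alpha>(1-n)/2$. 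If you wish to avoid the analytic family, the standard alternative is a Littlewood--Paley decomposition of $\df{\omega_t}$ into frequency-localized pieces, with an $L^2$ bound decaying geometrically in the frequency parameter and an $L^p$ bound growing controllably, summed after interpolation. Either way, some interpolation that converts uniform per-scale bounds into geometric decay must replace the direct vector-valued CZ argument; as written, your local part does not close.
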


\begin{remark}\label{rmk: 1.2}
In view of the results on the full spherical maximal operator obtained in \cite{ElK80,Ion00}, Theorem \ref{thm: main} shows that the lacunary spherical maximal operator $\L$ enjoys broader $L^p$-boundedness due to its lacunary structure. This phenomenon is similar to that in the Euclidean setting.

However, as mentioned above, the lacunary set $\Lambda$ in $\H^n$ is much larger than $\Lambda_{\R}$, defined for the operator \eqref{def: lac Rn} in $\R^n$. In fact, if we replace the lacunary set $\Lambda_{\R}$ by $\Lambda$, the operator
\begin{align*}
    S_{*}^{\Lambda}=\sup_{t\in\Lambda}\left|\int_{\SS^{n-1}}f(x-ty)\,\d\sigma(y)\right|
\end{align*}
is no longer bounded on $L^{p}(\R^n)$ for $1<p\le n/(n-1)$. To see this, we note that it follows from a sacling argument and Fatou's lemma that the operator $S_{*}^{\Lambda}$ has the same $L^p$-boundedness as the operator
\begin{align*}
    S_{*}^{\Lambda'}=\sup_{t\in{\Lambda'}}\left|\int_{\SS^{n-1}}f(x-ty)\,\d\sigma(y)\right|
\end{align*}
with $\Lambda'=\cup_{k=1}^{\infty}\lbrace{2^{-k}\lambda \mid \lambda\in\Lambda}\rbrace$, see also the remark right after \cite[Corollary 6.3.3]{Sog17}. Since $\Lambda'$ is dense in $\R_{+}$, the operator $S_{*}^{\Lambda'}$ in turn has the same $L^p$-boundedness as the operator $S_{*}$ defined by \eqref{Eq:1.001}, and is therefore bounded on $L^{p}(\R^n)$ if and only if $p>n/(n-1)$.

\end{remark}

Theorem \ref{thm: main} follows from the study of a large family of maximal operators $\lbrace{\L^{\alpha}\mid\alpha\in\C,\ \Re\alpha>(1-n)/2}\rbrace$ satisfying $\L^{0}=\L$, see \eqref{ennn} for its definition in terms of Fourier multipliers and Theorem \ref{thm: main alpha} for its $L^p$-boundedness result. Compared to the previously known results on spherical maximal operators obtained in \cite{ElK80,CSWY24}, the lacunary maximal operators $\L^{\alpha}$ possess broader $L^p$-boundedness, see Remark \ref{rmk: range}.

Moreover, some of the main ingredients in the proof of Theorem \ref{thm: main}, such as the Kunze–Stein phenomenon, also hold on more general symmetric spaces of non-compact type. However, another key ingredient, namely the explicit expression for the corresponding Fourier multiplier, is currently available only in the hyperbolic setting. It would therefore be both interesting and meaningful to investigate whether the enlarged lacunary structure persists in greater generality, and to determine its sharp range in that setting.

The organization of this paper is straightforward: after reviewing the analysis on hyperbolic spaces in Section \ref{Sec:2}, we establish the $L^p$-boundedness of the operators $\L^{\alpha}$ in Section \ref{Sec:3}. We decompose the operator $\L^{\alpha}$ into its \textit{local} and \textit{nonlocal} parts and establish their $L^p$-boundedness separately. The local part is handled using a Calder\'on-type argument from \cite{Cal79}, adapted to the hyperbolic space, while the nonlocal part relies on the Kunze-Stein phenomenon. In both cases, we make use of estimates for the multipliers associated with $\L^{\alpha}$. Throughout this paper, we use the notation $f \lesssim g$ for two positive functions to indicate that there exists a constant $C > 0$, which may depend on the parameters $\alpha$, $p$, or $n$ but is independent of the variables of both functions, such that $f \le C g$.

\section{Preliminaries}\label{Sec:2}
In this section, we review the analysis on the hyperbolic space. For $n\ge2$, we represent the $n$-dimensional real hyperbolic space $\H^n$ using the hyperboloid model:
\begin{align*}
\H^n=\{x=(x_0,x_1,\dots,x_n)\in\R^{1+n} \mid [x,x]=1,\ x_0\geq 1\},
\end{align*}
where $[x,y]=x_0y_0-x_1y_1 - \cdots - x_{n}y_{n}$ denotes the Lorentz quadratic form on $\H^n$. We denote by $[x]=[x,x]$ for all $x\in\mathbb{H}^n$. The Riemannian distance between two arbitrary points on $\H^n$ is then given by
\begin{align*}
d(x,y) = \arcosh([x,y]).
\end{align*}
One can also express $\H^n$ using general polar coordinates as
\begin{align*}
\H^n=\{(x_0,x')\in\R^{1+n} \mid
(x_0,x')=(\cosh r, \sigma \sinh r),\ r\ge0,\ \sigma\in\SS^{n-1}\}.
\end{align*}
In this context, the distance from a point to the origin $\0=(1,0,\dots,0)$ of $\H^n$ is given by
\begin{align*}
|x| = d(x, \0) 
= d((\cosh r, \sigma \sinh r), \0) = r.
\end{align*}

It is also known that the real hyperbolic space is a non-compact symmetric space of rank $1$. Then $\H^n$ can be realized as the homogeneous space $G/K$, where $G= \SO_{\mathrm{e}}(n,1)$ and $K=\SO(n)$. Recall that $K$ is the maximal compact subgroup of the non-compact semisimple Lie group $G$, and $G$ can be decomposed via the \textit{Cartan decomposition} $G=K\overline{A_{+}}K$, where $\overline{A_{+}}$ denotes the closure of $A_{+}$ which consists of matrices
\begin{align*}
a(r)=\left(\begin{array}{ccc}
\cosh r & \sinh r & 0\\
\sinh r & \cosh r & 0\\
0 & 0 & \Id_{n-1}
\end{array}\right),
\quad r\ge0.
\end{align*}
According to the Cartan decomposition, we denote by $\tau: ka(r) \mapsto ka(r)\cdot\0$ the diffeomorphism from $K\overline{A_{+}}$ onto $\H^n$.

Let $\!\df{\omega_r}$ be the normalized spherical measure such that 
\begin{align*}
\int_{\H^n}f \df{\omega_r}=\int_K f(k a(r)\cdot\0)\,\df{k},
\end{align*}
where $\!\df{k}$ is the normalized Haar measure on the compact subgroup $K$ such that $\int_{K}\df{k}=1$. Therefore, the lacunary spherical maximal operator \eqref{def: lac} is given by 
\begin{align*}
\L (f)(x)
= \sup_{t\in\Lambda} |(f*\!\df{\omega_t})(x)|
= \sup_{t\in\Lambda} 
\left| \int_K f(\tau^{-1}(x)ka(t)\cdot\0) \df{k}\right|.
\end{align*}

\subsection{Harmonic analysis on the hyperbolic space}
For $\sigma\in\SS^{n-1}$, we denote by $b(\sigma)=(1,\sigma)$ a point in $\R^{1+n}$. For any $f\in C^\infty_c(\H^n)$, the Fourier transform is defined by
\begin{align}\label{def: Fourier}
(\F f)(\lambda,\sigma) = 
\int_{\H^n}f(x)\,[x,b(\sigma)]^{\i\lambda-\frac{n-1}{2}} \df{x},
\end{align}
see, for instance, \cite[(1.3)]{Bra94}.

A function $f$ is said to be radial on $\H^n$, or bi-$K$-invariant on $G$, if it depends only on the distance $r$ of $x$ to the origin $\0$. For such a function, one can express it in the polar coordinates:
\begin{align}\label{Eq:2.1}
\int_{\H^n}f(x)\,\d x=\sigma_{n-1}\int_0^\infty f(r)\,\sinh^{n-1}\!r\,\d r,
\end{align}
where $\sigma_{n-1}$ is the surface area of the unit sphere $\SS^{n-1}$.

For a radial function, the Fourier transform \eqref{def: Fourier} reduces to the following spherical Fourier transform (or Harish-Chandra transform):
\begin{align*}
\F f(\lambda)= \sigma_{n-1} 
\int_0^\infty f(r)\, \varphi_{\lambda}(r)\,\sinh^{n-1}\!r\,\d r,
\end{align*}
where $\varphi_{\lambda}$ denotes the elementary spherical function given by
\begin{align}\label{Eq:2.2}
\varphi_\lambda(r)
= 
{\Gamma(n/2)\over \sqrt{\uppi}\,
\Gamma((n-1)/2)}\int_0^\uppi (\cosh r-\cos s\,\sinh r)^{\i\lambda-\frac{n-1}{2}}
\sin^{n-2}\!s\,\d s,
\end{align}
see \cite[(3.1.2) and (3.3.2)]{Bra94}. It is well known that the spherical function $\varphi_\lambda$ can be defined in several equivalent ways, for instance, using the Jacobi function, the Gauss hypergeometric function, or, in particular, the Legendre function:
\begin{align}\label{def: phi P}
\varphi_\lambda(r)
= 2^{\frac{n}{2}-1} \Gamma\!\left(\frac{n}{2}\right)
(\sinh r)^{-\frac{n}{2}+1}
P_{-\frac{1}{2}+\i\lambda}^{-\frac{n}{2}+1} (\cosh r),
\end{align}
where the Legendre function $P^\mu_\nu$ has the integral representation
\begin{align*}
P^\mu_\nu(\zeta)=\frac{2^{-\nu}(\zeta^2-1)^{-\frac{\mu}{2}}}{\Gamma(-\mu-\nu)\,\Gamma(\nu+1)}\int_0^\infty (\zeta+\cosh t)^{\mu-\nu-1}(\sinh t)^{2\nu+1}\,\d t,
\end{align*}
for $\Re(-\mu)>\Re\nu>-1$ and $\zeta$ not on the real axis between $-1$ and $\infty$.
See \cite[pp. 155-156]{EMOT53}.

We also have the inverse formula of the above Fourier transform:
\begin{align*}
f(r)=C_{n}\int_0^\infty \F f(\lambda)\,\varphi_\lambda(r)\,|\bc(\lambda)|^{-2}\,\d\lambda,
\end{align*}
where $C_n=2^{n-1}\uppi^{n/2-1}\Gamma(n/2)^{-1}$ and the Harish-Chandra $\bc$-function is defined by
\begin{align*}
\bc(\lambda)={\Gamma(n-1)\over \Gamma((n-1)/2)}{\Gamma(\i\lambda)\over\Gamma(\i\lambda+(n-1)/2)},
\end{align*}
see \cite[(3.1.3 and 3.3.3)]{Bra94}.

\subsection{Spectral multipliers on the hyperbolic space}\label{sec: multiplier}
In view of \cite[Chapter 8, Proposition 5.1]{Tay23}, the non-negative Laplace-Beltrami operator $-\Delta$ on $\H^n$ has a continuous spectrum $[(n-1)^2/4,\infty)$ on $L^2(\H^n)$. Hence, the operator $D=\sqrt{-\Delta-(n-1)^2/4}$ is a non-negative self-adjoint operator on $L^2(\H^n)$, and one can define a spectral multiplier $m(D)$ via the spectral theorem, where $m$ is a Borel function on $[0,\infty)$. Furthermore, one can write
\begin{align*}
m(D)(f)(x)=\int_{\H^n}f(y)\,K(d(x,y))\,\d y,
\end{align*}
with the radial kernel
\begin{align*}
K(r):=\F^{-1}(m)(r)=\int_0^\infty m(\lambda)\,\varphi_\lambda(r)\,|\bc(\lambda)|^{-2}\,\d\lambda.
\end{align*}
In addition, by the spectral theorem, the following Plancherel identity holds:
\begin{align}\label{Eq:2.3}
\|m(D)(f)\|_{L^2(\H^n)}
=
\|m\, (\F f)\|_{L^2(\R_+ \times \SS^{n-1},\,C_{n} |\bc(\lambda)|^{-2}\,\d\lambda)}
\end{align}
see, for instance, \cite[Theorem 3.3.2]{Bra94}. For simplicity, we will denote the $L^p$-norm $\|\cdot\|_{L^p}$ by $\|\cdot\|_{p}$ for all $1\le p \le \infty$.

In our study, we consider the family of averaging operators $\lbrace{M_{t}^{\alpha}}\rbrace_{\alpha\in\C}$ introduced in \cite{ElK80}, where $M_{t}^{\alpha}$ is given by
\begin{align}\label{Eq:2.4}
M^\alpha_t(f)(x)
= \frac{1}{\Gamma(\alpha)}
\frac{2\e^t}{(\e^t-1)^{2\alpha}} (\sinh t)^{2-n}
\int_{B(x,t)}[\e^t x-y]^{\alpha-1}f(y)\,\d y,
\end{align}
Here $B(x,t)$ is a geodesic ball of radius $t$ centered at $x\in\H^n$. According to \cite[Lemma 2.3]{CSWY24}, the operator $M_{t}^{\alpha}$ can be analytically continued to $\alpha \in \C$ with $\Re\alpha>(1-n)/2$, satisfying
\begin{align*}
M^\alpha_t(f)(x)=m^\alpha_t(D)(f)(x),
\end{align*} 
where
\begin{align}\label{Eq:2.6}
m^\alpha_t(\lambda)=
2^{\frac{n-2}{2}+\alpha} \Gamma\!\left(\frac{n}{2}\right)
\frac{\e^{\alpha t}}{(\e^t-1)^{2\alpha}}
(\sinh t)^{\frac{2-n}{2}+\alpha}
P_{-\frac{1}{2}+\i\lambda}^{\frac{2-n}{2}-\alpha}(\cosh t).
\end{align}
In other words, the operator $M^\alpha_t$ is a Fourier multiplier with symbol $m^\alpha_t$. Note that $m^0_t(\lambda)$ corresponds to the spherical function $\varphi_{\lambda}(t)$ defined by \eqref{def: phi P}.

The following estimates for $m^\alpha_t$ will be used in the proof of the main theorem.
\begin{lemma}\label{le3}
Suppose $\alpha\in\C$ with $\Re\alpha>(1-n)/2$. Then, the following estimates hold for all $\lambda\in\R$.
\begin{enumerate}[label=(\roman*)]
\item For all $t>0$, we have
\begin{align}\label{Eq:2.9}
|m^\alpha_t(\lambda)| \lesssim \,(1+t)\,\e^{-\frac{n-1}{2} t}.
\end{align}
\item If $0<t\leq 1$, we have
\begin{align}\label{Eq:2.7}
\left|\frac{\d}{\d \lambda} m^\alpha_t(\lambda)\right|
\lesssim t.
\end{align}
If in addition $|\lambda|t\geq1$, then
\begin{align}\label{Eq:2.8}
|m^\alpha_t(\lambda)| \lesssim (|\lambda|t)^{-(\Re\alpha+\frac{n-1}{2})}.
\end{align}
\end{enumerate}
\end{lemma}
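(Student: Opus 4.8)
The plan is to reduce every estimate to the behavior of the spherical function $\varphi_\lambda$ through an integral representation of $m^\alpha_t$. Since $M^\alpha_t=m^\alpha_t(\D)$ and $\varphi_\lambda$ is a radial eigenfunction of convolution by any radial kernel (with eigenvalue equal to its spherical transform), one has $M^\alpha_t\varphi_\lambda=m^\alpha_t(\lambda)\,\varphi_\lambda$; evaluating at $\0$ and using $\varphi_\lambda(\0)=1$ gives $m^\alpha_t(\lambda)=(M^\alpha_t\varphi_\lambda)(\0)$. Because $x,y\in\H^n$ satisfy $[x,y]=\cosh d(x,y)$, the weight in \eqref{Eq:2.4} is radial, namely $[\e^t\0-y]=\e^{2t}-2\e^t\cosh d(\0,y)+1=2\e^t(\cosh t-\cosh d(\0,y))$, and \eqref{Eq:2.1} turns \eqref{Eq:2.4} into
\begin{align*}
  m^\alpha_t(\lambda)
  = \frac{2^{\alpha}\sigma_{n-1}}{\Gamma(\alpha)}
  \frac{\e^{\alpha t}}{(\e^t-1)^{2\alpha}}(\sinh t)^{2-n}
  \int_0^t (\cosh t-\cosh\rho)^{\alpha-1}\,\varphi_\lambda(\rho)\,\sinh^{n-1}\!\rho\,\d\rho,
\end{align*}
valid for $\Re\alpha>0$. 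For $(1-n)/2<\Re\alpha\le0$ I would invoke the analytic continuation of \cite[Lemma 2.3]{CSWY24}, i.e.\ integrate by parts in $\rho$ to raise the exponent $\alpha-1$; since $(\cosh t-\cosh\rho)^{\alpha}$ vanishes at $\rho=t$ when $\Re\alpha>0$, no boundary term appears at the outer endpoint, and one is reduced to an absolutely convergent integral of the same shape with $\Re\alpha$ increased.

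For (i) the key point is that, for real $\lambda$, the base $\cosh\rho-\cos s\,\sinh\rho$ in \eqref{Eq:2.2} is positive, whence $|\varphi_\lambda(\rho)|\le\varphi_0(\rho)$, and the Harish--Chandra estimate gives $\varphi_0(\rho)\lesssim(1+\rho)\,\e^{-\frac{n-1}{2}\rho}$. Taking absolute values inside the representation replaces $\varphi_\lambda$ by this bound and produces a $\lambda$--free integral. Its size is governed by the boundary $\rho=t$: writing $\cosh t-\cosh\rho\approx\sinh t\,(t-\rho)$ there and using $\sinh^{n-1}\!\rho\approx\e^{(n-1)t}$, the prefactor $\e^{\alpha t}(\e^t-1)^{-2\alpha}(\sinh t)^{2-n}$ combines with $(\cosh t-\cosh\rho)^{\alpha-1}$ and $\varphi_0(\rho)\sinh^{n-1}\!\rho$ so that the $\alpha$--linear part of the total exponent cancels and the surviving exponential is exactly $\e^{-\frac{n-1}{2}t}$, with the factor $(1+t)$ coming from $\varphi_0$. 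This concentration at the bounding sphere is what encodes the volume growth of $\H^n$, and it yields \eqref{Eq:2.9} uniformly in $\lambda$ for every $\Re\alpha>(1-n)/2$.

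For the derivative bound \eqref{Eq:2.7} I would differentiate the representation in $\lambda$. From \eqref{Eq:2.2}, differentiating under the integral brings down a factor $\i\log(\cosh\rho-\cos s\,\sinh\rho)$, and since $\cosh\rho-\cos s\,\sinh\rho\in[\e^{-\rho},\e^{\rho}]$ this logarithm is at most $\rho$ in absolute value, giving the clean pointwise bound $\bigl|\tfrac{\d}{\d\lambda}\varphi_\lambda(\rho)\bigr|\le\rho\,\varphi_0(\rho)$. For $0<t\le1$ the $\lambda$--free normalized integral $\frac{\e^{\Re\alpha\,t}}{(\e^t-1)^{2\Re\alpha}}(\sinh t)^{2-n}\int_0^t(\cosh t-\cosh\rho)^{\Re\alpha-1}\sinh^{n-1}\!\rho\,\d\rho$ is $\lesssim1$ (by the Euclidean-type rescaling $\rho=ts$, $\cosh t-\cosh\rho\approx(t^2-\rho^2)/2$), so inserting $\bigl|\tfrac{\d}{\d\lambda}\varphi_\lambda(\rho)\bigr|\le\rho\,\varphi_0(\rho)\lesssim\rho\le t$ gives \eqref{Eq:2.7}.

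Finally, \eqref{Eq:2.8} is the high-frequency decay, and it is where the real work lies. For $t\le1$ the space is locally Euclidean, and for $|\lambda|\rho\ge1$ the spherical function obeys the Bessel-type bound $|\varphi_\lambda(\rho)|\lesssim(|\lambda|\rho)^{-\frac{n-1}{2}}$ together with the oscillation $\e^{\pm\i\lambda\rho}$ coming from its Harish--Chandra (Legendre) asymptotics. Inserting this into the representation, the amplitude carries the endpoint singularity $(\cosh t-\cosh\rho)^{\alpha-1}\sim(\sinh t)^{\alpha-1}(t-\rho)^{\alpha-1}$; integrating it against $\e^{\pm\i\lambda\rho}$ produces, as for the Fourier transform of $(t-\rho)_+^{\alpha-1}$, a gain of $|\lambda|^{-\Re\alpha}$, while the $\varphi_\lambda$ amplitude contributes $(|\lambda|t)^{-\frac{n-1}{2}}$ near $\rho\approx t$; after accounting for the prefactors these combine to $(|\lambda|t)^{-(\Re\alpha+\frac{n-1}{2})}$. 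The main obstacle is to make this stationary-phase / fractional-integration-by-parts argument rigorous and uniform in $\lambda$: one must control the two oscillatory branches $\e^{\pm\i\lambda\rho}$ simultaneously and carry the computation through the analytic continuation when $\Re\alpha\le0$, where the endpoint singularity is non-integrable and the boundary terms generated by the integration by parts have to be tracked precisely.
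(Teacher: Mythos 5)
Your proposal has a genuine gap, and it is exactly at the point where the lemma is delicate: the range $(1-n)/2<\Re\alpha\le 0$. Your entire argument runs through the representation
\begin{align*}
  m^\alpha_t(\lambda)
  \;\propto\;
  \frac{\e^{\alpha t}}{(\e^t-1)^{2\alpha}}(\sinh t)^{2-n}
  \int_0^t (\cosh t-\cosh\rho)^{\alpha-1}\,\varphi_\lambda(\rho)\,\sinh^{n-1}\!\rho\,\d\rho,
\end{align*}
which is absolutely convergent only for $\Re\alpha>0$; there your bounds $|\varphi_\lambda(\rho)|\le\varphi_0(\rho)$ and $|\tfrac{\d}{\d\lambda}\varphi_\lambda(\rho)|\le\rho\,\varphi_0(\rho)$ are correct and the computations for \eqref{Eq:2.9} and \eqref{Eq:2.7} do close. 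But for $\Re\alpha\le 0$ you propose to continue analytically by integrating by parts in $\rho$, and this destroys the mechanism of your proof: integration by parts produces terms containing $\partial_\rho\bigl(\varphi_\lambda(\rho)\sinh^{n-1}\!\rho\bigr)$, and $\partial_\rho\varphi_\lambda$ is \emph{not} dominated by $\varphi_0$ --- it grows linearly in $|\lambda|$ (the spherical function oscillates like $\e^{\pm\i\lambda\rho}$). So after continuation you can no longer take absolute values and get bounds uniform in $\lambda$, which is precisely what \eqref{Eq:2.9} and \eqref{Eq:2.7} assert. You never address how to recover this uniformity, and for \eqref{Eq:2.8} you explicitly leave the oscillatory-integral argument as an unresolved ``obstacle,'' so that part is a plan rather than a proof.

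The paper's route avoids the continuation problem entirely by working with a different integral representation, the Mehler--Dirichlet formula for the Legendre function appearing in \eqref{Eq:2.6}:
\begin{align*}
  P_{-\frac{1}{2}+i\lambda}^{\frac{2-n}{2}-\alpha}(\cosh t)
  = C_{\alpha}\,(\sinh t)^{\frac{2-n}{2}-\alpha}
  \int_0^t(\cosh t-\cosh s)^{\alpha+\frac{n-3}{2}}\cos(\lambda s)\,\d s,
\end{align*}
see \cite[p.~156, (8)]{EMOT53}. The exponent here is $\alpha+\frac{n-3}{2}$, not $\alpha-1$, so the integral converges absolutely exactly when $\Re\alpha>(1-n)/2$, i.e.\ on the full range of the lemma. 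With this formula, \eqref{Eq:2.7} is immediate and uniform in $\lambda$: differentiating in $\lambda$ brings down a factor $s\le t$ inside an absolutely convergent, $\lambda$-free integral (this is the only estimate the paper proves by hand; \eqref{Eq:2.9} and \eqref{Eq:2.8} are quoted from \cite[Lemma 2.4]{CSWY24}, and the same representation is the natural tool for them --- the cosine factor supplies the oscillation needed for \eqref{Eq:2.8}). If you want to salvage your write-up, replace your $\Re\alpha>0$ representation by this one; your absolute-value and differentiation arguments then go through verbatim on the whole range, and no analytic continuation or tracking of boundary terms is needed.
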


\begin{proof}
Estimates \eqref{Eq:2.9} and \eqref{Eq:2.8} follow directly from \cite[Lemma 2.4]{CSWY24}. To prove \eqref{Eq:2.7}, we use the following integral representation of the Legendre function:
\begin{align*}
P_{-\frac{1}{2}+\i\lambda}^{\frac{2-n}{2}-\alpha}(\cosh t)
= C_{\alpha} (\sinh t)^{\frac{2-n}{2}-\alpha}
\int_0^t(\cosh t-\cosh s)^{\alpha+\frac{n-3}{2}}\cos (\lambda s)\,\d s,
\end{align*}
see \cite[p. 156, (8)]{EMOT53}. Substituting this formula into \eqref{Eq:2.6} and differentiating both sides with respect to $\lambda$, we obtain, for $0<t\leq 1$, 
\begin{align*}
\left|{\d\over\d \lambda}m^\alpha_t(\lambda)\right|
\lesssim t^{-2\Re\alpha-n+2}
\int_0^t(\cosh t-\cosh s)^{\Re\alpha+\frac{n-3}{2}}s\,\d s.
\end{align*}

Note that
\begin{align*}
\cosh t-\cosh s
= {\e^t\over 2}(1-\e^{-t-s})(1-\e^{-t+s})
\sim t^2-s^2,
\end{align*}
for all $0\leq s \leq t\leq 1$. Therefore,  we obtain, for $0<t\leq 1$,
\begin{align*}
\left|{\d\over\d \lambda}m^\alpha_t(\lambda)\right|
\lesssim t^{-2\Re\alpha-n+2} 
\int_0^t(t^2-s^2)^{\Re\alpha+\frac{n-3}{2}} s\,\d s 
\lesssim t,
\end{align*}
which is \eqref{Eq:2.7}. The proof is therefore complete.
\end{proof}

Finally, recalling \eqref{Eq:2.4} and formula
\begin{align*}
[\e^tx-y]=2\e^t(\cosh t-\cosh d(x,y)),
\end{align*}
we see that the Fourier multiplier $M^\alpha_t=m^\alpha_t(D)$ has the radial kernel
\begin{align}\label{Eq:2.5}
K^\alpha_t(r)
=\frac{1}{\Gamma(\alpha)}
\frac{(2\e^t)^\alpha}{(\e^t-1)^{2\alpha}}
(\sinh t)^{2-n}\,(\cosh t-\cosh r)^{\alpha-1}\idct_{[0,t]}(r)
\end{align}
provided that $\Re\alpha>0$.
\section{Proof of Theorem~\ref{thm: main}}\label{Sec:3}
For $\alpha\in\C$ with $\Re\alpha>(1-n)/2$, we introduce the lacunary maximal operator
\begin{align}\label{ennn}
\L^\alpha(f)(x)=\sup_{t\in\Lambda}|m^\alpha_t(D)(f)(x)|,
\end{align}
where the lacunary set is defined by \eqref{eq:deflac}, and $m_{t}^{\alpha}(D)$ is the Fourier multiplier corresponding to \eqref{Eq:2.6}. Since
\begin{align*}
\F(\d\omega_t)(\lambda)
= \varphi_\lambda(t)
= m^0_t(\lambda),
\end{align*}
the operator $\L$ defined by \eqref{def: lac} coincides exactly with the operator $\L^0$ defined in \eqref{ennn}. Therefore, this section is devoted to proving the $L^p$-boundedness of the operator $\L^{\alpha}$, with the result stated as follows.
\begin{theorem}\label{thm: main alpha}
Suppose that $1<p\le\infty$ and $\alpha\in\C$ with
\begin{align}\label{def: alpha}
\Re\alpha >
(n-1)\left|\frac{1}{p}-\frac{1}{2}\right|-\frac{n-1}{2}.
\end{align}
Then the operator $\L^{\alpha}$ is bounded on $L^{p}(\H^n)$.
\end{theorem}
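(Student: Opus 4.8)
The plan is to split the supremum defining $\L^\alpha$ over $\Lambda=\{2^{-j}\mid j\in\Z_+\}\cup\Z_+$ into a \emph{local} part, the supremum over the small radii $t=2^{-j}\in(0,1]$, and a \emph{nonlocal} part, the supremum over the integer radii $t=k\in\Z_+$, so that pointwise $\L^\alpha(f)\le\sup_{j\in\Z_+}|m^\alpha_{2^{-j}}(D)f|+\sup_{k\in\Z_+}|m^\alpha_{k}(D)f|$, and to bound the two by separate methods. For both parts the two extreme estimates are an $L^2$ bound, which Plancherel \eqref{Eq:2.3} together with Lemma \ref{le3} makes available on the full range $\Re\alpha>(1-n)/2$, and $L^1$/$L^\infty$-type bounds, which the explicit kernel \eqref{Eq:2.5} supplies once $\Re\alpha>0$. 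Because $\alpha\mapsto m^\alpha_t(D)$ is analytic, I would then linearize each maximal operator and run Stein's analytic interpolation between the line $\Re\alpha=(1-n)/2$ (the vertex $p=2$) and the line $\Re\alpha=0$ (the vertices $p=1,\infty$); a short computation shows this fills in exactly the region \eqref{def: alpha}. For $p>2$ one may alternatively use duality, each $m^\alpha_t(D)$ being a radial convolution whose $L^p$ and $L^{p'}$ norms agree.

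For the local part I would follow the square-function scheme of Calder\'on \cite{Cal79}, transplanted to $\H^n$. Using the telescoping identity $m^\alpha_{2^{-J}}(D)f=m^\alpha_1(D)f+\sum_{j<J}\delta_j f$ with $\delta_j=m^\alpha_{2^{-(j+1)}}(D)-m^\alpha_{2^{-j}}(D)$, I would control the leading smooth term by the Hardy--Littlewood maximal operator, which is $L^p$-bounded for $p>1$ by Clerc--Stein \cite{CS74}, and dominate the oscillation by the square function $\bigl(\sum_j|\delta_j f|^2\bigr)^{1/2}$. By \eqref{Eq:2.3} its $L^2$ norm is the integral of $\sum_j|m^\alpha_{2^{-(j+1)}}(\lambda)-m^\alpha_{2^{-j}}(\lambda)|^2$ against $|\F f(\lambda)|^2|\bc(\lambda)|^{-2}$; the regularity estimate \eqref{Eq:2.7} handles the low frequencies $|\lambda|2^{-j}\lesssim1$, while the decay \eqref{Eq:2.8} makes the high-frequency tail summable precisely when $\Re\alpha>(1-n)/2$. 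Combined with the uniform bound $\sup_j\|m^\alpha_{2^{-j}}(D)\|_{\infty\to\infty}\lesssim1$ that follows from $\int|K^\alpha_{2^{-j}}|\lesssim1$ for $\Re\alpha>0$ (and the matching weak $(1,1)$ bound), interpolation in $\alpha$ delivers the local part on all of \eqref{def: alpha}. This part sees only the local, essentially Euclidean, geometry of $\H^n$.

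The nonlocal part is where the geometry at infinity enters, and I expect it to be the crux. Rather than bound the supremum crudely, I would dominate it by an $\ell^p$ sum, $\bigl\|\sup_{k\in\Z_+}|m^\alpha_k(D)f|\bigr\|_p^p\le\sum_{k\in\Z_+}\|m^\alpha_k(D)f\|_p^p$, reducing matters to summing the operator norms $\|m^\alpha_k(D)\|_{p\to p}$. For $1<p\le2$ the Kunze--Stein phenomenon, in its Herz-majorization form for a radial kernel, bounds this norm by $\int_{\H^n}|K^\alpha_k(x)|\,\varphi_{\i\beta_p}(|x|)\,\d x$ with $\beta_p=(n-1)(1/p-1/2)$, and $\varphi_{\i\beta_p}(r)\lesssim(1+r)\,\e^{-(n-1)r/p'}$. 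Feeding in the kernel \eqref{Eq:2.5}, using $\cosh k-\cosh r\sim\e^{k}(1-\e^{-(k-r)})$ and the polar formula \eqref{Eq:2.1}, a direct computation should give $\|m^\alpha_k(D)\|_{p\to p}\lesssim(1+k)\,\e^{-(n-1)k/p'}$ for $\Re\alpha>0$; the exponential factor is exactly the gain of Kunze--Stein over the exponential volume growth of balls. Since $(n-1)/p'>0$, these norms are $p$-summable, and the nonlocal part is $L^p$-bounded; the cases $p>2$ follow by duality, and $p=\infty$ from the uniform bound $\sup_k\|m^\alpha_k(D)\|_{\infty\to\infty}<\infty$. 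For $\Re\alpha\le0$, where \eqref{Eq:2.5} is unavailable, I would recover the same exponential decay by interpolating each operator $m^\alpha_k(D)$, uniformly in $k$, between this bound and the Plancherel estimate $\|m^\alpha_k(D)\|_{2\to2}\lesssim(1+k)\,\e^{-(n-1)k/2}$ from \eqref{Eq:2.9}.

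The hardest point, I anticipate, is precisely this nonlocal estimate. On $\R^n$ the integer dilates would be harmless, but on $\H^n$ the volume of $B(\0,k)$ grows like $\e^{(n-1)k}$, so the naive operator norms do not decay and the sum over $k$ diverges; only the Kunze--Stein phenomenon converts this exponential growth into the decay $\e^{-(n-1)k/p'}$ needed for convergence. Obtaining the decay with the sharp exponent---so that the threshold from the nonlocal part matches the one from the local part and yields exactly \eqref{def: alpha}---requires estimating the kernel integral against $\varphi_{\i\beta_p}$ carefully near the endpoint $r=k$, and the passage to $\Re\alpha\le0$ must keep the decay uniform in $k$ so that summability survives. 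This is the mechanism behind the enlarged lacunary set $\Lambda$ and the improvement over the Euclidean range.
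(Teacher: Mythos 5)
Your overall architecture coincides with the paper's: the same local/nonlocal splitting of $\Lambda$, the Kunze--Stein majorization (Lemma \ref{Lem:2.4}) for the integer radii, and Stein analytic interpolation in $\alpha$ between the $L^2$ line $\Re\alpha>(1-n)/2$ and the $L^1,L^\infty$ line $\Re\alpha>0$ to fill in \eqref{def: alpha}. Your nonlocal treatment is sound and even a bit more direct than the paper's: estimating $\int_0^\infty \e^{-(n-1)r/p'}|K^\alpha_k(r)|\sinh^{n-1}r\,\d r\lesssim \e^{-(n-1)k/p'}$ for $\Re\alpha>0$ works for the whole kernel (the paper splits off a Hardy--Littlewood piece first, but that is cosmetic), and interpolating each operator $m^\alpha_k(D)$ in $\alpha$ to reach $\Re\alpha\le0$ is a legitimate variant of the paper's interpolation of the maximal operators themselves. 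The genuine gaps are both in the local part; incidentally, your prediction that the nonlocal part is the crux is inverted --- Kunze--Stein makes it the soft half.

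First, your square-function reduction fails as written. Telescoping from scale $1$ leaves you with $\sup_J\bigl|\sum_{j<J}\delta_jf\bigr|$, a supremum of \emph{partial sums}, which is controlled by $\sum_j|\delta_jf|$ but \emph{not} by $\bigl(\sum_j|\delta_jf|^2\bigr)^{1/2}$: the inequality $\sup_J\bigl|\sum_{j<J}a_j\bigr|\le\bigl(\sum_j|a_j|^2\bigr)^{1/2}$ is false (take $a_j=1/j$), and the $\ell^1$ route is unavailable because $\sup_\lambda|\delta_j(\lambda)|=O(1)$ near $\lambda\sim2^j$, so $\sum_j\|\delta_jf\|_2$ does not converge. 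The correct implementation --- the paper's Proposition \ref{prop:3.3} --- compares each average with a smooth approximation \emph{at the matching scale}, $T_j=m^\alpha_{2^{-j}}(D)-m^\alpha_{2^{-j}}(0)\,\cH_{2^{-2j}}$, so that the supremum runs over single terms and $\sup_j|T_jf|\le\bigl(\sum_j|T_jf|^2\bigr)^{1/2}$ is legitimate; then \eqref{Eq:2.7} and \eqref{Eq:2.8} close the $L^2$ bound exactly as you intend. This is repairable, but the step you wrote is wrong. Second, and more seriously, the weak $(1,1)$ bound for $\fl^\alpha_*$ at $\Re\alpha>0$ --- which your interpolation needs to reach the threshold $-(n-1)/p'$ as $p\to1$ --- is disposed of in a parenthesis, with the suggestion that it accompanies the bound $\int|\widetilde{K}^\alpha_{2^{-j}}|\lesssim1$. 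Uniform $L^1$ bounds on the kernels give the $L^\infty\to L^\infty$ estimate and nothing more; a supremum of uniformly $L^1$-normalized averages need not be weak $(1,1)$ (the full spherical maximal operator is the standard counterexample). Proving this bound is the bulk of the paper's work (Proposition \ref{prop:3.4}): localization to balls of radius $1$, where $\H^n$ is doubling, the Coifman--Weiss Calder\'on--Zygmund decomposition, and the H\"ormander-type cancellation estimate $\sum_j\int_{(2Q_l)^c}|\widetilde{K}^\alpha_{2^{-j}}(d(x,y))-\widetilde{K}^\alpha_{2^{-j}}(d(x,y_l))|\,\d x=O(1)$, obtained from gradient bounds on the kernel \eqref{Eq:2.5}; this is precisely where the hypothesis $\Re\alpha>0$ does real work. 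Without this argument (or an $L^p$ substitute as $p\to1$ at $\Re\alpha>0$), your scheme only yields the smaller region coming from, say, Hardy--Littlewood domination at $\Re\alpha>1$, not all of \eqref{def: alpha}.
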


\begin{remark}\label{rmk: range}
According to \cite[Theorem 1.2]{CSWY24} and \cite[Theorem 3]{ElK80}, the spherical maximal operator $\mathcal{M}^\alpha_*(f)(x)=\sup_{t>0}|m^\alpha_t(D)(f)(x)|$ is bounded on $L^p(\H^n)$ in the following cases:
\begin{itemize}
\item $1<p\leq 2$ and $\Re\alpha>(1-n)+n/p$;
\item $2<p\leq p_n$ and $\Re\alpha>(2-n)/p-(p-2)/[pp_n(p_n-2)]$;
\item $p_n<p\leq \infty$ and $\Re\alpha>(2-n)/p-1/(pp_n)$.
\end{itemize}
Here $p_n=4$ for $n=2$ and $p_n=2(n+1)/(n-1)$ for $n\geq 3$. These relations correspond to the region above the thick dotted segments $OABC$ in Figure \ref{Fig:1}, while the relation \eqref{def: alpha} corresponds to the region above the thick dashed folded segments $ODE$. From this, we intuitively observe that the lacunary maximal operator $\mathcal{L}^{\alpha}$ enjoys much broader $L^p$-boundedness than the spherical maximal operator $\mathcal{M}^\alpha_*$.

\begin{figure}
\centering
\begin{tikzpicture}[x=1.00mm, y=1.00mm, inner xsep=0pt, inner ysep=0pt, outer xsep=0pt, outer ysep=0pt]
\path[line width=0mm] (11.26,42.80) rectangle +(83.15,77.11);
\definecolor{L}{rgb}{0,0,0}
\path[line width=0.15mm, draw=L] (19.90,44.80) -- (19.90,113.72);
\definecolor{F}{rgb}{0,0,0}
\path[line width=0.15mm, draw=L, fill=F] (19.90,113.72) -- (19.20,110.92) -- (19.90,113.72) -- (20.60,110.92) -- (19.90,113.72) -- cycle;
\path[line width=0.15mm, draw=L] (16.05,80.04) -- (87.66,80.04);
\path[line width=0.15mm, draw=L, fill=F] (87.66,80.04) -- (84.86,80.74) -- (87.66,80.04) -- (84.86,79.34) -- (87.66,80.04) -- cycle;
\draw(18.80,115.00) node[anchor=base east]{\selectfont $\Re\alpha$};
\draw(89.92,77.18) node[anchor=base west]{\selectfont $1\over p$};
\path[line width=0.15mm, draw=L] (80.01,110.01) circle (1.25mm);
\path[line width=0.15mm, draw=L] (50.00,50.01) circle (1.25mm);
\path[line width=0.15mm, draw=L] (19.90,80.00) circle (1.25mm);
\path[line width=0.15mm, draw=L] (80.01,80.04) circle (1.25mm);
\path[line width=0.15mm, draw=L] (50.00,65.08) circle (1.25mm);
\path[line width=0.15mm, draw=L] (33.61,70.46) circle (1.25mm);
\path[line width=0.60mm, draw=L, dash pattern=on 0.50mm off 0.50mm] (19.90,80.11) -- (33.65,70.50);
\path[line width=0.60mm, draw=L, dash pattern=on 0.50mm off 0.50mm] (79.44,109.97);
\path[line width=0.60mm, draw=L, dash pattern=on 0.50mm off 0.50mm] (33.69,70.50) -- (50.01,64.97);
\path[line width=0.60mm, draw=L, dash pattern=on 0.50mm off 0.50mm] (50.04,65.08) -- (80.01,110.07);
\draw(17.89,81.67) node[anchor=base east]{\selectfont $O$};
\path[line width=0.60mm, draw=L, dash pattern=on 2.00mm off 1.00mm] (19.87,80.09) -- (49.99,50.00);
\path[line width=0.60mm, draw=L, dash pattern=on 2.00mm off 1.00mm] (50.00,50.01) -- (79.91,79.95);
\path[line width=0.15mm, draw=L, dash pattern=on 2.00mm off 1.00mm] (50.00,50.07) -- (50.00,80.05);
\path[line width=0.15mm, draw=L, dash pattern=on 2.00mm off 1.00mm] (80.00,80.07) -- (80.00,110.05);
\path[line width=0.15mm, draw=L, dash pattern=on 2.00mm off 1.00mm] (33.64,70.36) -- (33.64,80.06);
\path[line width=0.15mm, draw=L, dash pattern=on 2.00mm off 1.00mm] (80.01,110.00) -- (19.87,110.01);
\path[line width=0.15mm, draw=L, dash pattern=on 2.00mm off 1.00mm] (49.99,64.95) -- (20.06,64.93);
\path[line width=0.15mm, draw=L, dash pattern=on 2.00mm off 1.00mm] (49.99,49.93) -- (19.93,49.93);
\path[line width=0.15mm, draw=L, dash pattern=on 2.00mm off 1.00mm] (33.56,70.58) -- (19.83,70.55);
\draw(49.27,67.74) node[anchor=base east]{\selectfont $B$};
\draw(51.58,46.26) node[anchor=base west]{\selectfont $D$};
\draw(81.70,110.62) node[anchor=base west]{\selectfont $C$};
\draw(49.93,84.00) node[anchor=base]{\selectfont $1\over 2$};
\draw(19.01,63.00) node[anchor=base east]{\selectfont $2-n\over 2$};
\draw(19.24,48.50) node[anchor=base east]{\selectfont $1-n\over 2$};
\draw(19.01,70.00) node[anchor=base east]{\selectfont ${2-n\over p_n}-{1\over p_n^2}$};
\draw(33.62,84.00) node[anchor=base]{\selectfont $1\over p_n$};
\draw(18.26,108.68) node[anchor=base east]{\selectfont $1$};
\draw(80.04,73.50) node[anchor=base]{\selectfont $1$};
\draw(81.00,81.26) node[anchor=base west]{\selectfont $E$};
\draw(34.28,71.94) node[anchor=base west]{\selectfont $A$};
\end{tikzpicture}
\caption{The admissible range of $(\Re\alpha,p)$ for which the operators $\mathcal{M}^\alpha_*$ and $\L^{\alpha}$ are respectively $L^p$-bounded.}
\label{Fig:1}
\end{figure}
\end{remark}

To prove Theorem \ref{thm: main alpha}, we consider two other operators:
\begin{align*}
\fg^\alpha_*(f)(x):=\sup_{j\in\PZ}|m^\alpha_j(D)(f)(x)|\quad\mbox{and}\quad \fl^\alpha_*(f)(x):=\sup_{j\in\PZ}|m^\alpha_{2^{-j}}(D)(f)(x)|.
\end{align*}
Since
\begin{align*}
\L^{\alpha}(f)(x)\leq \fg^\alpha_*(f)(x)+\fl^\alpha_*(f)(x)
\end{align*}
holds for all $f$ and for a.e. $x\in\H^n$, we will first establish the $L^p$-boundedness of the operators $\fg^\alpha_*$ and $\fl^\alpha_*$ separately and then derive Theorem \ref{thm: main} using an interpolation argument. We start with the nonlocal part, where the approach differs from the Euclidean setting. One main tool is the following Kunze-Stein phenomenon.
\begin{lemma}\label{Lem:2.4}
Suppose that $1<p<2$ and $\kappa$ is a non-negative radial function on $\H^n$. Then we have, for all $f\in L^p(\H^n)$,
\begin{align*}
\left\|f*\kappa\right\|_p
\lesssim 
\left(\int_0^\infty\e^{-\frac{n-1}{p'}r}\kappa(r)\,\sinh^{n-1} r\,\d r\right)\|f\|_p.
\end{align*}
\end{lemma}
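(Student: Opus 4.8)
The plan is to reduce the statement to two ingredients: the operator-norm form of the Kunze--Stein phenomenon (Herz's majorizing principle) for the rank-one group $G=\SO_{\mathrm{e}}(n,1)$, together with a pointwise decay estimate for one explicitly chosen spherical function. Set $\beta_p=(n-1)\left(\frac1p-\frac12\right)$, which satisfies $0<\beta_p<\frac{n-1}{2}$ precisely when $1<p<2$. Herz's principle asserts that, for a non-negative radial $\kappa$,
\begin{align*}
  \|f*\kappa\|_p \le \|f\|_p \int_{\H^n} \kappa\,\varphi_{-\i\beta_p}\,\d x,
\end{align*}
where $\varphi_{-\i\beta_p}$ is the spherical function \eqref{def: phi P} at the complementary-series parameter $\lambda=-\i\beta_p$. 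Rewriting the right-hand integral in polar coordinates via \eqref{Eq:2.1}, the lemma follows at once from the decay bound
\begin{align*}
  \varphi_{-\i\beta_p}(r) \lesssim \e^{-\frac{n-1}{p'}r}, \qquad r\ge0.
\end{align*}

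For this decay estimate I would use the integral representation \eqref{Eq:2.2}. Taking $\lambda=-\i\beta_p$ gives $\i\lambda=\beta_p$, so the exponent there equals $\beta_p-\frac{n-1}{2}=-\frac{n-1}{p'}$, whence
\begin{align*}
  \varphi_{-\i\beta_p}(r)
  = \frac{\Gamma(n/2)}{\sqrt{\uppi}\,\Gamma((n-1)/2)}
  \int_0^\uppi (\cosh r-\cos s\,\sinh r)^{-\frac{n-1}{p'}}\sin^{n-2}\!s\,\d s.
\end{align*}
Since $\cosh r-\cos s\,\sinh r\asymp \e^{-r}+\e^{r}s^2$ for $0\le s\le\uppi/2$ and $\asymp \e^{r}$ for $s\in[\uppi/2,\uppi]$, I would split the $s$-integral at the scale $s\sim\e^{-r}$. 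The region $s\gtrsim\e^{-r}$ contributes $\lesssim \e^{-\frac{n-1}{p'}r}\int_{\e^{-r}}^{\uppi} s^{\,n-2-\frac{2(n-1)}{p'}}\,\d s\lesssim\e^{-\frac{n-1}{p'}r}$, precisely because the exponent $n-2-\frac{2(n-1)}{p'}>-1$, i.e. $p'>2$; the region $s\lesssim\e^{-r}$ contributes $\lesssim\e^{\frac{n-1}{p'}r}(\e^{-r})^{n-1}=\e^{-\frac{n-1}{p}r}\le\e^{-\frac{n-1}{p'}r}$. This gives the required bound, and hence the lemma. Note that the hypothesis $1<p<2$ (equivalently $p'>2$) enters here in an essential way, which is exactly why the Kunze--Stein gain is restricted to this range.

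\textbf{Main obstacle.} The delicate point is justifying Herz's principle in this single-integral form. One is tempted to obtain it by Riesz--Thorin interpolation between the trivial bound $\|f*\kappa\|_1\le\|f\|_1\int_{\H^n}\kappa\,\d x$ and the $L^2$ bound $\|f*\kappa\|_2\le\|f\|_2\int_{\H^n}\kappa\,\varphi_0\,\d x$; the latter comes from Plancherel \eqref{Eq:2.3} together with $|\varphi_\lambda|\le\varphi_0$ for real $\lambda$, which itself follows from \eqref{Eq:2.2} since the base $\cosh r-\cos s\,\sinh r$ is positive. However, this yields only the weaker \emph{product} bound $(\int_{\H^n}\kappa\,\d x)^{1-\theta}(\int_{\H^n}\kappa\,\varphi_0\,\d x)^{\theta}$ with $\theta=2/p'$, which by H\"older merely dominates, and does not equal, $\int_{\H^n}\kappa\,\varphi_{-\i\beta_p}\,\d x$. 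To recover the sharp estimate one must instead apply Stein's complex interpolation theorem to an analytic family of convolution operators obtained by continuing the spectral parameter of the spherical transform, with the $L^2$-endpoint again furnished by Plancherel and the $L^1$-endpoint by the identity $\varphi_{-\i(n-1)/2}\equiv1$ (read off from \eqref{Eq:2.2}); checking admissibility and the endpoint bounds of this family is the technical heart of the argument. Alternatively, one may invoke the Kunze--Stein phenomenon in this form as a known result for rank-one symmetric spaces and pass directly to the spherical function estimate above.
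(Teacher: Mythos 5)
Your proposal is correct and takes essentially the same route as the paper: Herz's ``principe de majoration'' for non-negative radial kernels combined with the exponential decay $\varphi_{\pm\i(n-1)(1/p-1/2)}(r)\lesssim\e^{-(n-1)r/p'}$ of the spherical function at the complementary-series parameter, which is exactly how the paper argues (it cites \cite{Her70,Cow97} for Herz's principle and \cite{NPP14,KRZ23} for the decay, then dualizes from $q=p'>2$). The only differences are inessential: you apply Herz directly at $p<2$ rather than at $p'$ followed by duality (equivalent, since $\varphi_\lambda$ is even in $\lambda$ and radial kernels are self-adjoint), and you verify the decay bound by hand from \eqref{Eq:2.2} --- your splitting of the $s$-integral at scale $\e^{-r}$, including the role of $p'>2$, is sound --- where the paper simply cites it.
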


\begin{proof}
Since $\kappa$ is non-negative, we know from Herz's ``rincipe de majoration'' \cite{Her70,Cow97} that 
\begin{align*}
\left\|f*\kappa\right\|_{q}
= \|f\|_{q} \int_{\H^n}
\varphi_{\i(n-1)(\frac{1}{2} - \frac{1}{q})}(x) \,\kappa(x) \df{x},
\end{align*}
where 
\begin{align*}
\left|\varphi_{\i(n-1)(\frac{1}{2} - \frac{1}{q})}(x)\right|
\lesssim \e^{-\frac{n-1}{q}|x|}
\end{align*}
for all $2<q<\infty$. See \cite[Lemma 3.4]{NPP14}, also \cite[(2.8) and (2.9)]{KRZ23}. Therefore, we obtain
\begin{align*}
\left| \int_{\H^n}
\varphi_{\i(n-1)(\frac{1}{2} - \frac{1}{q})}(x) \kappa(x) \df{x} \right|
\lesssim
\int_0^\infty\e^{-\frac{(n-1)r}{q}}\kappa(r)\,\sinh^{n-1} r\,\d r,
\end{align*}
since $\kappa$ is radial on $\H^n$. The lemma follows from a duality argument with $q=p'$.
\end{proof}

The following proposition establishes the $L^p$-boundedness of the operator $\fg^\alpha_*$ for $1<p\le2$.
\begin{proposition}\label{prop:3.2}
Suppose that $\alpha\in \C$ and $1<p<2$. Then the estimate
\begin{align}\label{est: global2}
\|\fg^\alpha_*(f)\|_2 \lesssim \|f\|_2,
\end{align}
holds provided that $\Re\alpha>(1-n)/2$; and
\begin{align}\label{est: globalp}
\|\fg^\alpha_*(f)\|_p \lesssim \|f\|_p,
\end{align}
holds provided that $\Re\alpha>0$.
\end{proposition}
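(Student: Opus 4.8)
The plan is to prove the two estimates by entirely different mechanisms, reflecting that $\fg^\alpha_*$ is the \emph{nonlocal} part assembled from the integer dilations $t=j\ge1$, where the exponential decay dictated by the geometry of $\H^n$ at infinity is decisive. For \eqref{est: global2} I would use a square-function argument together with Plancherel, while for \eqref{est: globalp} I would pass to the convolution kernels and invoke the Kunze--Stein phenomenon (Lemma \ref{Lem:2.4}).

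\textbf{The $L^2$ estimate.} Here I would not need the maximal structure beyond the trivial majorization of a supremum by an $\ell^2$-sum. Writing
\begin{align*}
\fg^\alpha_*(f)(x)\le\Big(\sum_{j\in\PZ}|m^\alpha_j(D)(f)(x)|^2\Big)^{1/2},
\end{align*}
taking $L^2$ norms, exchanging sum and integral, and invoking the Plancherel identity \eqref{Eq:2.3} reduces matters to the bound, uniform in $\lambda$,
\begin{align*}
\sum_{j\in\PZ}|m^\alpha_j(\lambda)|^2\lesssim\sum_{j\ge1}(1+j)^2\,\e^{-(n-1)j}<\infty.
\end{align*}
This is exactly where estimate \eqref{Eq:2.9} of Lemma \ref{le3} enters: it gives $|m^\alpha_j(\lambda)|\lesssim(1+j)\e^{-\frac{n-1}{2}j}$ uniformly in $\lambda$, and since $n\ge2$ the series converges geometrically. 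This step uses only $\Re\alpha>(1-n)/2$, precisely the hypothesis under which \eqref{Eq:2.9} holds.

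\textbf{The $L^p$ estimate for $1<p<2$.} Dominating the supremum by a sum and passing absolute values inside the convolution,
\begin{align*}
\fg^\alpha_*(f)\le\sum_{j\in\PZ}|f*K^\alpha_j|\le\sum_{j\in\PZ}|f|*|K^\alpha_j|,
\end{align*}
I would use that for $\Re\alpha>0$ the explicit kernel \eqref{Eq:2.5} satisfies $|K^\alpha_j|=\tfrac{\Gamma(\Re\alpha)}{|\Gamma(\alpha)|}K^{\Re\alpha}_j$, a nonnegative radial function. Lemma \ref{Lem:2.4} then yields $\bigl\||f|*|K^\alpha_j|\bigr\|_p\lesssim A_j\|f\|_p$ with
\begin{align*}
A_j=\int_0^\infty\e^{-(n-1)r/p'}\,|K^\alpha_j(r)|\,\sinh^{n-1}r\,\d r,
\end{align*}
so everything reduces to proving $\sum_{j\in\PZ}A_j<\infty$, the constant in Lemma \ref{Lem:2.4} being independent of the kernel.

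The crux, and the step I expect to be the main obstacle, is the asymptotic evaluation of $A_j$ as $j\to\infty$. Inserting \eqref{Eq:2.5} and using $\tfrac{(2\e^j)^{\Re\alpha}}{(\e^j-1)^{2\Re\alpha}}\sim2^{\Re\alpha}\e^{-\Re\alpha\,j}$ and $(\sinh j)^{2-n}\sim2^{n-2}\e^{(2-n)j}$, the problem becomes estimating
\begin{align*}
I_j=\int_0^j\e^{-(n-1)r/p'}(\cosh j-\cosh r)^{\Re\alpha-1}\sinh^{n-1}r\,\d r.
\end{align*}
Integrability near the endpoint $r=j$ is guaranteed precisely by $\Re\alpha>0$, since there $\cosh j-\cosh r\sim\sinh j\,(j-r)$. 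The dominant contribution comes from $r$ near $j$: substituting $r=j-u$ and using $\cosh j-\cosh(j-u)\sim\tfrac{\e^j}{2}(1-\e^{-u})$, the $j$-dependence factors out as $\e^{j[(n-1)/p+\Re\alpha-1]}$ times a $u$-integral that converges (at $u=0$ because $\Re\alpha>0$, at $u=\infty$ because $n\ge2$). Combining the three exponential factors produces a clean cancellation leaving
\begin{align*}
A_j\lesssim\e^{-(n-1)j/p'},
\end{align*}
so that, since $n\ge2$ and $p'<\infty$, the geometric series $\sum_{j\ge1}\e^{-(n-1)j/p'}$ converges and \eqref{est: globalp} follows. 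I would also verify that the region where $r$ is bounded away from $j$, on which $(\cosh j-\cosh r)^{\Re\alpha-1}\sim(\e^j/2)^{\Re\alpha-1}$, contributes a strictly smaller exponential order and hence does not affect the rate $-(n-1)/p'$.
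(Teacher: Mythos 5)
Your proof is correct, and the $L^p$ half of it follows a genuinely different route from the paper's. The $L^2$ half coincides with the paper: dominate the supremum by the $\ell^2$ sum, apply Plancherel \eqref{Eq:2.3} and the uniform symbol bound \eqref{Eq:2.9}, and sum $\sum_{j\ge1}(1+j)^2\e^{-(n-1)j}<\infty$ under $\Re\alpha>(1-n)/2$. For \eqref{est: globalp}, however, the paper does not apply Kunze--Stein to the whole kernel: starting from $|K^\alpha_j(r)|\lesssim\e^{-(n-1)j}(1-\e^{r-j})^{\Re\alpha-1}\idct_{[0,j]}(r)$ (via \eqref{Eq:2.5} and \eqref{Eq:2.500001}), it splits off a bulk piece $\e^{-(n-1)(j-\frac{1}{2})}\idct_{[0,j-\frac{1}{2}]}(r)$ --- essentially a normalized ball average, dominated pointwise by the Hardy--Littlewood maximal function and hence $L^p$-bounded for all $p>1$ by Clerc--Stein --- and applies Lemma \ref{Lem:2.4} only to the boundary layer supported in $[j-\frac{1}{2},j]$. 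You instead feed the entire kernel $|K^\alpha_j|=\frac{\Gamma(\Re\alpha)}{|\Gamma(\alpha)|}K^{\Re\alpha}_j$ into Kunze--Stein, and your evaluation of $A_j$ is sound: since $1-\e^{-r-j}$ is bounded below for $j\ge1$, one has $|K^\alpha_j(r)|\asymp\e^{-(n-1)j}(1-\e^{r-j})^{\Re\alpha-1}$ on $[0,j]$, so with $\sinh^{n-1}r\lesssim\e^{(n-1)r}$ and the substitution $u=j-r$ one gets $A_j\lesssim\e^{-(n-1)j/p'}\int_0^\infty\e^{-(n-1)u/p}(1-\e^{-u})^{\Re\alpha-1}\,\d u$, the last integral being finite exactly when $\Re\alpha>0$, and $\sum_j\e^{-(n-1)j/p'}<\infty$. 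The trade-off: the paper's splitting confines the $p<2$ restriction (inherent to Lemma \ref{Lem:2.4}) to the thin annular piece and handles the bulk part for every $p>1$, which is structurally cleaner if one wants to see where the restriction really lives; your computation is shorter and more uniform --- one lemma, one integral --- and fully covers the range $1<p<2$ that the proposition actually claims. The remaining differences are cosmetic: you use $\sup_j\le\sum_j$ plus the triangle inequality for the norm, where the paper uses $\|\sup_j|\cdot|\,\|_p^p\le\sum_j\|\cdot\|_p^p$; both produce convergent geometric series ($\sum_j\e^{-(n-1)j/p'}$ versus $\sum_j\e^{-(n-1)(p-1)j}$).
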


\begin{remark}
It is noteworthy that an exponentially strong maximal inequality for the operator $\fg^0_*$ was previously established by Nevo in a more general setting. See, for instance, \cite[Theorem 2]{Nev98}.
\end{remark}

\begin{proof}
According to the Plancherel identity \eqref{Eq:2.3}, we have
\begin{align*}
\|\fg^\alpha_*(f)\|^2_2&\leq \sum_{j=1}^\infty\|m^\alpha_j(D)(f)\|_2^2
\leq \sum_{j=1}^\infty\|m^\alpha_j\|^2_\infty\|f\|_2^2.
\end{align*}
We deduce from the estimate \eqref{Eq:2.9} that 
\begin{align*}
\|\fg^\alpha_*(f)\|^2_2&\leq \sum_{j=1}^\infty\|m^\alpha_j(D)(f)\|_2^2
\lesssim \left(\sum_{j=1}^\infty j^2\e^{-(n-1)j}\right)\|f\|_2^2
\lesssim \|f\|_2^2.
\end{align*}
We then obtain the $L^2$-estimate \eqref{est: global2}.

To prove the $L^p$-estimate \eqref{est: globalp}, we need the kernel of the Fourier multiplier $m^\alpha_t(D)$. We deduce from \eqref{Eq:2.5} and
\begin{align}\label{Eq:2.500001}
\cosh t-\cosh r={\e^t\over 2}(1-\e^{r-t})(1-\e^{-r-t})
\end{align}
that, for all $t\ge1$,
\begin{align*}
|K^\alpha_t(r)|
&\lesssim \e^{-(n-1)t}(1-\e^{r-t})^{\Re\alpha-1}\idct_{[0,t]}(r)\\
&\lesssim
\underbrace{\e^{-(n-1)(t-\frac12)}\idct_{[0,t-\frac12]}(r)}_{
K^{\alpha,1}_t(r)}
+
\underbrace{\e^{-(n-1)t}(t-r)^{\Re\alpha-1}\idct_{[t-\frac12,t]}(r)}_{
K^{\alpha,2}_t(r)}.
\end{align*}
Denote by 
\begin{align*}
\fg^{\alpha,\ell}_*(f)(x):=\sup_{j\in\PZ}\left|\int_{\H^n}f(y)\,K^{\alpha,\ell}_j(d(x,y))\,d y\right| \quad \mbox{for }\ell=1,2.
\end{align*}
Then we have clearly 
\begin{align*}
\|\fg^\alpha_*(f)\|_p\leq \|\fg^{\alpha,1}_*(|f|)\|_p+\|\fg^{\alpha,2}_*(|f|)\|_p.
\end{align*}

For the $L^p$-boundedness of the operator $\fg^{\alpha,1}_*$, note that $\fg^{\alpha,1}_*(|f|)(x)$ is dominated (up to a constant) by the Hardy-Littlewood maximal function
\begin{align*}
f^*(x):=\sup_{t>0}{1\over |B(x,t)|}\int_{B(x,t)}|f(y)|\,\d y
\end{align*}
of $f$ for a.e. $x\in\H^n$. We know from \cite[Theorem 2]{CS74} that $\|f^*\|_p \lesssim \|f\|_p$ for all $1<p\leq \infty$, thus $\fg^{\alpha,1}_*$ is bounded on $L^p(\H^n)$ for such $p$.

It remains to show that $\fg^{\alpha,2}_*$ is bounded on $L^p(\H^n)$ for $1<p<2$, which is a consequence of Lemma \ref{Lem:2.4}: 
\begin{align*}
&\|\fg^{\alpha,2}_*(f)\|^p_p \\
&\leq \sum_{j=1}^\infty \left\|\int_{\H^n}f(y)\,K^{\alpha,2}_j(d(x,y))\,\d y\right\|_p^p\\
&\lesssim \left[\sum_{j=1}^\infty\left(\int_{j-\frac12}^j \e^{-\frac{n-1}{p'}r}\e^{-(n-1)j}(j-r)^{\Re\alpha-1}\sinh^{n-1} r\,\d r\right)^p\right]\|f\|^p_p\\
&\lesssim \left(\sum_{j=1}^\infty\e^{-(n-1)(p-1)j}\right)\|f\|^p_p
\lesssim \|f\|_p^p
\end{align*}
provided that $\Re\alpha>0$ and $1<p<2$. This completes the proof of Proposition \ref{est: globalp}.
\end{proof}

Now we turn to the local part. We will show that the operator $\fl^\alpha_{*}$ defined by
\begin{align*}
\fl^\alpha_*(f)(x)=\sup_{j\in\PZ}|m^\alpha_{2^{-j}}(D)(f)(x)|
\end{align*}
is bounded from $L^2(\H^n)$ to $L^2(\H^n)$ and from $L^1(\H^n)$ to $L^{1,\infty}(\H^n)$, respectively. The argument relies on the estimates of the symbol \eqref{Eq:2.6} and follows the spirit of the arguments presented in \cite{Cal79}.

\begin{proposition}\label{prop:3.3}
Suppose $\alpha\in \C$ with $\Re\alpha>(1-n)/2$. Then
\begin{align*}
\|\fl^\alpha_{*}(f)\|_2\lesssim\|f\|_2.
\end{align*}
\end{proposition}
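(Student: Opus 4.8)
The plan is to adapt Calderón's square-function argument \cite{Cal79} to the spectral calculus of $D$. The obstruction to applying a bare square function is that the value $m^\alpha_{2^{-j}}(0)$ at the bottom of the spectrum neither vanishes nor decays in $j$ (for $\alpha=0$ it equals $\varphi_0(2^{-j})\to1$), so $\sum_j|m^\alpha_{2^{-j}}(\lambda)|^2$ diverges near $\lambda=0$. I would therefore peel off a low-frequency, approximate-identity piece carrying this value and treat only the cancellative remainder by a square function. Concretely, fix $\Phi\in C_c^\infty(\R)$ with $\Phi\equiv1$ on $[-\tfrac12,\tfrac12]$ and $\supp\Phi\subset[-1,1]$, and for $j\in\PZ$ write
\begin{align*}
m^\alpha_{2^{-j}}(\lambda)
= \underbrace{m^\alpha_{2^{-j}}(0)\,\Phi(2^{-j}\lambda)}_{=:\,a_j(\lambda)}
+ \underbrace{\big(m^\alpha_{2^{-j}}(\lambda)-m^\alpha_{2^{-j}}(0)\,\Phi(2^{-j}\lambda)\big)}_{=:\,r_j(\lambda)}.
\end{align*}
Then $r_j(0)=0$, and pointwise
\begin{align*}
\fl^\alpha_*(f)
\le \sup_{j\in\PZ}|a_j(D)f| + \Big(\sum_{j\in\PZ}|r_j(D)f|^2\Big)^{1/2},
\end{align*}
so it suffices to bound each term in $L^2$.

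For the main term, \eqref{Eq:2.9} gives $|m^\alpha_{2^{-j}}(0)|\lesssim1$ uniformly in $j\in\PZ$, so it is enough to bound $\sup_j|\Phi(2^{-j}D)f|$. Since $t=2^{-j}\le\tfrac12$ and $\Phi$ is a compactly supported cutoff, the radial kernel of $\Phi(tD)$ is an approximate identity concentrated at scale $t$; hence $\sup_j|\Phi(2^{-j}D)f|\lesssim f^*$, dominated by the Hardy–Littlewood maximal function, which is $L^2$-bounded by \cite[Theorem 2]{CS74}. This is the same mechanism used for $\fg^{\alpha,1}_*$ in Proposition \ref{prop:3.2}.

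For the remainder, since $r_j$ depends only on $\lambda$, Plancherel \eqref{Eq:2.3} applied termwise gives
\begin{align*}
\Big\|\Big(\sum_{j\in\PZ}|r_j(D)f|^2\Big)^{1/2}\Big\|_2^2
=\sum_{j\in\PZ}\|r_j(D)f\|_2^2
\le\Big(\sup_{\lambda\in\R}\sum_{j\in\PZ}|r_j(\lambda)|^2\Big)\|f\|_2^2,
\end{align*}
so everything reduces to the uniform bound $\sum_{j}|r_j(\lambda)|^2\lesssim1$. Writing $\beta=\Re\alpha+\tfrac{n-1}{2}>0$, I would split the sum according to the size of $2^{-j}|\lambda|$. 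When $2^{-j}|\lambda|\le\tfrac12$ one has $\Phi(2^{-j}\lambda)=1$, hence $r_j(\lambda)=m^\alpha_{2^{-j}}(\lambda)-m^\alpha_{2^{-j}}(0)$, and the derivative estimate \eqref{Eq:2.7} yields $|r_j(\lambda)|\lesssim 2^{-j}|\lambda|$; the squares of these geometrically decaying terms sum to $O(1)$. When $2^{-j}|\lambda|>1$ one has $\Phi(2^{-j}\lambda)=0$, hence $r_j(\lambda)=m^\alpha_{2^{-j}}(\lambda)$, and the decay estimate \eqref{Eq:2.8} yields $|r_j(\lambda)|\lesssim(2^{-j}|\lambda|)^{-\beta}$; since $\beta>0$ the squares again sum geometrically to $O(1)$. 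The transitional band $\tfrac12<2^{-j}|\lambda|\le1$ contains $O(1)$ indices, each contributing $O(1)$ by \eqref{Eq:2.9}. Adding the three ranges gives $\sum_j|r_j(\lambda)|^2\lesssim1$, uniformly in $\lambda$.

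The main obstacle is precisely this uniform square-sum estimate, where the two regimes must be balanced: the low-frequency cancellation is supplied by \eqref{Eq:2.7} and crucially uses $\Phi(0)=1$ to cancel the non-decaying value $m^\alpha_{2^{-j}}(0)$, while the high-frequency summability is supplied by \eqref{Eq:2.8} and requires $\beta=\Re\alpha+(n-1)/2>0$, i.e.\ exactly the hypothesis $\Re\alpha>(1-n)/2$. A secondary technical point is justifying the approximate-identity domination for $\Phi(tD)$ on $\H^n$; since only $t\le\tfrac12$ occurs, the local geometry is comparable to the Euclidean one and the required radial-kernel bounds follow from the standard small-scale asymptotics of $\varphi_\lambda$.
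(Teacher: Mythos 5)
Your decomposition and the square-function treatment of the remainder $r_j$ are sound, and in fact they mirror the paper's own proof almost exactly: the paper splits $m^\alpha_{2^{-j}}(\lambda)=m^\alpha_{2^{-j}}(0)\,\e^{-2^{-2j}\lambda^2}+\bigl(m^\alpha_{2^{-j}}(\lambda)-m^\alpha_{2^{-j}}(0)\,\e^{-2^{-2j}\lambda^2}\bigr)$ and runs the same low/high-frequency analysis via \eqref{Eq:2.7}, \eqref{Eq:2.8}, \eqref{Eq:2.9}. The genuine gap is in the main term, precisely the point you dismiss as ``a secondary technical point'': the claim that the radial kernel of $\Phi(2^{-j}D)$, with $\Phi\in C_c^\infty$, is an approximate identity concentrated at scale $2^{-j}$, so that $\sup_j|\Phi(2^{-j}D)f|\lesssim f^*$. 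This is false on $\H^n$, and the failure is not local but comes from the geometry at infinity. By the Harish--Chandra expansion, $\varphi_\lambda(r)\approx \e^{-\frac{n-1}{2}r}\bigl(\bc(\lambda)\e^{\i\lambda r}+\bc(-\lambda)\e^{-\i\lambda r}\bigr)$ for $r\ge 1$, so the kernel of $\Phi(tD)$ at distance $r$ is of size $\e^{-\frac{n-1}{2}r}$ times the Euclidean Fourier transform of the $C_c^\infty$ function $\Phi(t\cdot)\,\bc(-\cdot)^{-1}$; that transform decays faster than any polynomial but \emph{cannot} decay exponentially (a compactly supported multiplier does not extend holomorphically to any strip). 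Since the volume element is $\sinh^{n-1}r\sim\e^{(n-1)r}$, the kernel's tail is not integrable: $\Phi(tD)$ is not bounded on $L^1(\H^n)$, is not weak-$(1,1)$, and is not pointwise dominated by the Hardy--Littlewood maximal function. Concretely, take $f=\idct_{B(z,1)}$ with $d(x,z)=R\gg1$: then $f^*(x)\approx\e^{-(n-1)R}$, while $|\Phi(2^{-j}D)f(x)|$ is of size $\e^{-\frac{n-1}{2}R}$ up to subexponential factors, which is exponentially larger. The mechanism you invoke does work for $\fg^{\alpha,1}_*$ in Proposition \ref{prop:3.2}, but only because there the kernel \eqref{Eq:2.5} is genuinely compactly supported; a spectral cutoff $\Phi(tD)$ has a global kernel, and on $\H^n$ its tail is fatal.

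The repair is exactly the paper's choice of low-frequency piece: replace $\Phi(2^{-j}\lambda)$ by the Gaussian $\e^{-2^{-2j}\lambda^2}$, i.e., peel off $m^\alpha_{2^{-j}}(0)\,\cH_{2^{-2j}}$ where $\cH_t$ is the heat semigroup. The main term is then controlled by the heat maximal operator $\cH_*$, whose $L^2$-boundedness follows from Stein's maximal theorem for symmetric diffusion semigroups \cite{Ste70} --- a result valid on any measure space, requiring no kernel concentration and insensitive to the failure of doubling. Your three-regime square-function estimate then goes through verbatim with $\e^{-(2^{-j}\lambda)^2}$ in place of $\Phi(2^{-j}\lambda)$: for $2^{-j}|\lambda|\le\tfrac12$ use $|1-\e^{-s^2}|\lesssim s^2$ together with \eqref{Eq:2.7}; for $2^{-j}|\lambda|>1$ use \eqref{Eq:2.8} and the rapid decay of $\e^{-s^2}$; the transitional band has $O(1)$ terms bounded by \eqref{Eq:2.9}. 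The moral is that on $\H^n$ the ``free'' maximal bound must come from the semigroup structure rather than from kernel localization.
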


\begin{proof}
For $t\ge0$, we denote by $\cH_t=\e^{tD^2}$ the heat diffusion semigroup on $\H^n$, where $D^2$ is the shifted Laplacian defined in Section \ref{sec: multiplier}. Let $\cH_*$ be the heat maximal operator defined by
\begin{align*}
\cH_*(f)(x)=\sup_{t>0}|\cH_t(f)(x)|.
\end{align*}

Therefore, the operator $\fl^\alpha_{*}$ satisfies
\begin{align*}
\|\fl^\alpha_{*}(f)\|_2\leq 
\underbrace{\sup_{t>0}|m^\alpha_t(0)|\,\|\cH_*(f)\|_2 \vphantom{\left\|\sup_{j\in \PZ}|T_j(f)|\right\|_2}}_{\mathcal{I}_1}
+
\underbrace{\left\|\sup_{j\in \PZ}|T_j(f)|\right\|_2}_{\mathcal{I}_2},
\end{align*}
where
\begin{align*}
T_j(f)(x)=(m^\alpha_{2^{-j}}(D)-m^\alpha_{2^{-j}}(0)\cH_{2^{-2j}})(f)(x).
\end{align*}
On the one hand, we know from \eqref{Eq:2.9} that $m^\alpha_t(0)$ is bounded for all $t > 0$. On the other hand, the heat maximal operator $\cH_*$ is known to be bounded on $L^2(\H^n)$ (see \cite{Ste70}). Combining these two facts, we obtain $\mathcal{I}_1\lesssim \|f\|_2$.

Regarding $\mathcal{I}_2$, we deduce from Plancherel's identity \eqref{Eq:2.3} that
\begin{align*}
&\|T_j(f)\|_2^2 \\
&= C_n \int_{0}^{\infty}\int_{\SS^{n-1}}
\left|m^\alpha_{2^{-j}}(\lambda)
-m^\alpha_{2^{-j}}(0)\,\e^{-2^{-2j}\lambda^2}\right|^2\,
|(\F f)(\lambda,\sigma)|^{2} |\bc(\lambda)|^{-2}
\df{\sigma} \df{\lambda} \\
&\le \left\lbrace\sup_{\lambda\in\R_+}
\left|m^\alpha_{2^{-j}}(\lambda)
-m^\alpha_{2^{-j}}(0)\,\e^{-2^{-2j}\lambda^2}\right|^2 \right\rbrace
\|\F f\|_{L^2(\R_+ \times \SS^{n-1},\,C_{n} |\bc(\lambda)|^{-2}\,\d\lambda)}^2,
\end{align*}
for every $j\in\Z_{+}$, then
\begin{align*}
\mathcal{I}_2^2 \leq \sum_{j=1}^\infty\|T_j(f)\|_2^2
\leq 
\underbrace{
\left\lbrace\sup_{\lambda\in\R_+} \sum_{j=1}^\infty
\left|m^\alpha_{2^{-j}}(\lambda)
-m^\alpha_{2^{-j}}(0)\,\e^{-2^{-2j}\lambda^2}\right|^2 \right\rbrace
}_{\mathcal{I}_3}
\|f\|_2^2.
\end{align*}

To show the boundedness of $\mathcal{I}_3$, we divide the argument into two cases depending on whether $2^{-j}\lambda>1$ or $2^{-j}\lambda\le1$. On the one hand, we deduce straightforwardly from \eqref{Eq:2.9} and \eqref{Eq:2.8} that 
\begin{align*}
&\sum_{\lbrace{j\in\Z_+ \mid 2\leq 2^j<\lambda}\rbrace}
\left|m^\alpha_{2^{-j}}(\lambda)-m^\alpha_{2^{-j}}(0)\,\e^{-2^{-2j}\lambda^2}\right|^2\\
&\lesssim 
\sum_{\lbrace{j\in\Z_+ \mid 2\leq 2^j<\lambda}\rbrace}
\left(|m^\alpha_{2^{-j}}(\lambda)|^2+\left|m^\alpha_{2^{-j}}(0)\,\e^{-2^{-2j}\lambda^2}\right|^2\right)\\
&\lesssim 
\sum_{\lbrace{j\in\Z_+ \mid 2\leq 2^j<\lambda}\rbrace}
(2^{-j}\lambda)^{-2\left(\Re\alpha+\frac{n-1}{2}\right)},
\end{align*}
which is finite for all $\lambda>0$ provided that $\Re\alpha>(1-n)/2$. On the other hand, by applying the mean value theorem and combining \eqref{Eq:2.9} and \eqref{Eq:2.7}, we obtain
\begin{align*}
&\sum_{\lbrace{j\in\Z_+ \mid 2^j\ge\lambda}\rbrace}
\left|m^\alpha_{2^{-j}}(\lambda)-m^\alpha_{2^{-j}}(0)\,\e^{-2^{-2j}\lambda^2}\right|^2\\
&\lesssim
\sum_{\lbrace{j\in\Z_+ \mid 2^j\ge\lambda}\rbrace}
\left(\left|m^\alpha_{2^{-j}}(\lambda)-m^\alpha_{2^{-j}}(0)\right|^2+\left|m^\alpha_{2^{-j}}(0)\,\left(\e^{-2^{-2j}\lambda^2}-1\right)\right|^2\right)\\
&\lesssim
\sum_{\lbrace{j\in\Z_+ \mid 2^j\ge\lambda}\rbrace}
(2^{-j}\lambda)^2,
\end{align*}
which is also finite for all $\lambda>0$. We conclude that $\mathcal{I}_3$ is bounded and then $|\mathcal{I}_2|\lesssim\|f\|_2$, which completes the proof.
\end{proof}

The final component needed to prove Theorem \ref{thm: main alpha} is stated as follows.
\begin{proposition}\label{prop:3.4}
Suppose $\alpha\in \C$ with $\Re\alpha>0$. Then the operator $\fl^\alpha_*$ is bounded from $L^1(\H^n)$ to $L^{1,\infty}(\H^n)$. In other words, the estimate 
\begin{align}\label{Eq:3.1}
|\{x\in\H^n:\fl^\alpha_*(f)(x)>\gamma\}|\lesssim {\|f\|_1\over\gamma},
\end{align}
holds for all $\gamma>0$.
\end{proposition}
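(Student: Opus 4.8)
The plan is to combine the $L^2$-bound of Proposition~\ref{prop:3.3} with a Calderón–Zygmund argument in the spirit of \cite{Cal79}, adapted to the non-doubling geometry of $\H^n$. Writing $m^\alpha_{2^{-j}}(D)(f)=T_j(f)+m^\alpha_{2^{-j}}(0)\,\cH_{2^{-2j}}(f)$ as in the proof of Proposition~\ref{prop:3.3}, and using $|m^\alpha_{2^{-j}}(0)|\lesssim1$ from \eqref{Eq:2.9}, I would first reduce to a square function. Since a supremum is dominated by the $\ell^2$-norm,
\[
\fl^\alpha_*(f)\le C\,\sup_{0<t\le1}|\cH_t(f)|+\Big(\sum_{j\in\PZ}|T_j(f)|^2\Big)^{1/2}=:C\,\sup_{0<t\le1}|\cH_t(f)|+G(f).
\]
The truncated heat maximal operator $\sup_{0<t\le1}|\cH_t(\cdot)|$ is pointwise dominated by the Hardy–Littlewood maximal function (for $t\le1$ the spectral shift contributes only a bounded factor), hence is of weak type $(1,1)$ by \cite{Str81}. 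Thus it suffices to show $G$ is of weak type $(1,1)$. Note $G$ is bounded on $L^2(\H^n)$: indeed $\|G(f)\|_2^2=\sum_j\|T_j(f)\|_2^2\lesssim\|f\|_2^2$ is exactly the bound obtained in the proof of Proposition~\ref{prop:3.3} (there written $\mathcal{I}_2^2\lesssim\|f\|_2^2$).

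Next I would run a Calderón–Zygmund decomposition. Fix $\gamma>0$ and write $f=g+b$ at height $\gamma$, with $b=\sum_k b_k$, each $b_k$ supported in a ball $B_k=B(c_k,r_k)$ with $\int b_k=0$, $\int_{B_k}|b_k|\lesssim\gamma|B_k|$, $\sum_k|B_k|\lesssim\gamma^{-1}\|f\|_1$, and $\|g\|_\infty\lesssim\gamma$, $\|g\|_1\lesssim\|f\|_1$; such a decomposition is available on $\H^n$ via the local doubling of the Riemannian measure together with the weak-$(1,1)$ bound for the Hardy–Littlewood maximal function. The good part is handled by the $L^2$-bound: $|\{G(g)>\gamma/2\}|\lesssim\gamma^{-2}\|g\|_2^2\le\gamma^{-2}\|g\|_\infty\|g\|_1\lesssim\gamma^{-1}\|f\|_1$. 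For the bad part, set $B_k^*=B(c_k,2r_k)$ if $r_k\le1$ and $B_k^*=B(c_k,r_k+1)$ if $r_k>1$; the exponential volume growth of $\H^n$ (see \eqref{Eq:2.1}) gives $|B_k^*|\lesssim|B_k|$ in both cases, so $\Omega^*:=\bigcup_kB_k^*$ satisfies $|\Omega^*|\lesssim\gamma^{-1}\|f\|_1$. It then remains, by Chebyshev, to bound $\int_{(\Omega^*)^c}G(b)$. Using $\int b_k=0$, the triangle inequality in $\ell^2$, and $(\Omega^*)^c\subseteq(B_k^*)^c$,
\[
\int_{(\Omega^*)^c}\!G(b)\,\d x\le\sum_k\int_{B_k}|b_k(y)|\,H_k(y)\,\d y,\qquad
H_k(y):=\int_{(B_k^*)^c}\big\|\{\mathcal{K}^\alpha_j(x,y)-\mathcal{K}^\alpha_j(x,c_k)\}_j\big\|_{\ell^2}\,\d x,
\]
where $\mathcal{K}^\alpha_j(x,z)=K^\alpha_{2^{-j}}(d(x,z))-m^\alpha_{2^{-j}}(0)\,p_{2^{-2j}}(d(x,z))$ is the kernel of $T_j$, with $K^\alpha_t$ as in \eqref{Eq:2.5} and $p_t$ the radial kernel of $\cH_t$. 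Granting the Hörmander-type estimate $H_k(y)\lesssim1$ uniformly in $k$ and $y\in B_k$, the bad part contributes $\lesssim\gamma^{-1}\sum_k\|b_k\|_1\lesssim\gamma^{-1}\|f\|_1$, which yields \eqref{Eq:3.1}.

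The heart of the matter, and the step I expect to be the main obstacle, is the kernel regularity estimate $H_k(y)\lesssim1$; here $\rho:=d(y,c_k)\le r_k$ and $x$ ranges over $\{d(x,c_k)\ge2\rho\}$. Since $\|\cdot\|_{\ell^2}\le\|\cdot\|_{\ell^1}$, it suffices to sum the scalar integrals $\int_{(B_k^*)^c}|\mathcal{K}^\alpha_j(x,y)-\mathcal{K}^\alpha_j(x,c_k)|\,\d x$ over $j$. For balls with $r_k>1$ one invokes locality: $K^\alpha_{2^{-j}}$ is supported in $\{r\le2^{-j}\le1/2\}$ and $p_{2^{-2j}}$ decays like a Gaussian at scale $2^{-j}$, so once $d(x,c_k)>r_k+1$ both kernels are negligible. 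For $r_k\le1$ one splits the sum at $2^{-j}=\rho$. When $2^{-j}\le\rho$ the spherical parts vanish on $\{d(x,c_k)\ge2\rho\}$, and integrating the heat differences over this far region produces a Gaussian factor $\e^{-c(\rho\,2^{j})^2}$, making the terms summable. When $2^{-j}\ge\rho$, I would split each spherical shell into an interior part, where $K^\alpha_{2^{-j}}$ is smooth and a mean-value estimate together with $\int|\nabla p_{2^{-2j}}|\sim 2^{j}$ gives a contribution $\lesssim\rho\,2^{j}$ that sums geometrically to $\lesssim1$, and a boundary layer of width $\sim\rho$ around $\{d(x,c_k)=2^{-j}\}$, where the edge singularity $(\cosh2^{-j}-\cosh r)^{\alpha-1}\sim(1-r\,2^{j})^{\alpha-1}$ of \eqref{Eq:2.5} must be estimated directly; a computation bounds the boundary-layer integral of $|\mathcal{K}^\alpha_j|$ by $(\rho\,2^{j})^{\Re\alpha}$, and these sum geometrically over $\{2^{-j}\ge\rho\}$ \emph{precisely because} $\Re\alpha>0$. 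This is exactly where the hypothesis $\Re\alpha>0$ is indispensable: for $\Re\alpha\le0$ the edge singularity ceases to be integrable and the boundary contributions fail to be summable. Assembling the three regimes gives $H_k(y)\lesssim1$ and completes the proof.
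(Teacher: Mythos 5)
Your reduction to the square function $G(f)=\bigl(\sum_j|T_jf|^2\bigr)^{1/2}$ plus the truncated heat maximal operator is sound, and your sketch of the kernel-regularity estimates (boundary layer of the edge singularity giving $(\rho\,2^j)^{\Re\alpha}$, interior via the gradient, summability precisely because $\Re\alpha>0$) is in the right spirit. The genuine gap is the Calder\'on--Zygmund decomposition itself: you invoke a \emph{global} CZ decomposition of $f$ at height $\gamma$ on $\H^n$, with balls $B_k$ satisfying $\int_{B_k}|b_k|\lesssim\gamma|B_k|$, $\sum_k|B_k|\lesssim\gamma^{-1}\|f\|_1$ and $\|g\|_\infty\lesssim\gamma$, justified by ``local doubling plus the weak $(1,1)$ bound for the Hardy--Littlewood maximal function.'' This justification only works when the CZ balls have bounded radius. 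In the standard construction the Whitney balls of $\Omega=\{Mf>\gamma\}$ have radius comparable to their distance to $\Omega^c$, and to pass from $Mf(x_k)\le\gamma$ at a nearby exterior point to $\frac{1}{|B_k|}\int_{B_k}|f|\lesssim\gamma$ one needs $|B(x_k,Cr_k)|\lesssim|B_k|$, i.e.\ doubling \emph{at scale $r_k$}. On $\H^n$ this ratio grows like $\e^{(n-1)(C-1)r_k}$, and $\Omega$ can contain arbitrarily large balls when $\gamma$ is small (take $f=\idct_{B(\0,R)}$ with $R\gg1$ and $\gamma<1/2$; then $\Omega\supseteq B(\0,R)$ and the Whitney balls near the center have radius $\sim R$). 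So the decomposition with the stated properties is simply not available on $\H^n$ at all heights --- this failure of classical CZ theory on exponential-growth spaces is exactly the obstruction the non-doubling geometry creates, and it cannot be waved away by local doubling.

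Note also that it is your own first step that forces the globality: subtracting the heat semigroup makes the kernels of $T_j$ have non-compact support (Gaussian tails), so you cannot localize the operator before decomposing. The paper takes a different and simpler route which avoids this entirely: since $\Re\alpha>0$, the kernel bound \eqref{Eq:2.5} gives the positive, \emph{local} majorant $|K^\alpha_t(r)|\lesssim t^{-n}(1-r/t)^{\Re\alpha-1}\idct_{[0,1]}(r/t)$ for $0<t\le1$, supported in $\{r\le1\}$; no heat subtraction, square function, or $L^2$ input is needed (the good part is handled by the uniform $L^1$ bound of this kernel and $\|g\|_\infty\lesssim\delta$). Because the majorized maximal operator is local, one can first localize to a bounded-overlap family of unit balls $B_k$, dispose of the trivial case $\gamma\le\|\idct_{2B_k}f\|_1/|B_k|$, and then run the Coifman--Weiss CZ decomposition of $\idct_{2B_k}f$ \emph{inside} $4B_k$, where the doubling constants are uniform, so all CZ balls $Q_l$ stay small. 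To repair your argument you would either need to localize your square-function operator (e.g.\ truncate the heat kernels and control the tails separately) before decomposing, or replace the classical ball-based CZ decomposition by one adapted to exponential growth; as written, the decomposition you rely on does not exist.
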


\begin{proof}
Recall that for $\Re\alpha>0$, the Fourier multiplier $m^\alpha_t(D)=M^\alpha_t$ has a radial kernel $K^\alpha_t(r)$ given by \eqref{Eq:2.5}. It follows from \eqref{Eq:2.5} and \eqref{Eq:2.500001} that, for $0<t\leq 1$,
\begin{align}\label{Eq:3.2}
|K^\alpha_t(r)|\lesssim \widetilde{K}^\alpha_t(r):=t^{-n}\left(1-{r\over t}\right)^{\Re\alpha-1}\idct_{[0,1]}\!\left({r\over t}\right).
\end{align}
We denote by $\widetilde{\fl}^\alpha_*$ the maximal operator associated with this positive radial kernel:
\begin{align*}
\widetilde{\fl}^\alpha_*(f)(x)=\sup_{j\in\PZ}\left|\int_{\H^n}f(y)\,\widetilde{K}^\alpha_{2^{-j}}(d(x,y))\,\d y\right|.
\end{align*}
Therefore, to prove \eqref{Eq:3.1}, it suffices to show
\begin{align}\label{Eq:3.3}
|\{x\in\H^n:\widetilde{\fl}^\alpha_*(f)(x)>\gamma\}|\lesssim{\|f\|_1\over\gamma}
\end{align}
for all non-negative function $f\in L^1(\H^n)$.

\textbf{Proof of \eqref{Eq:3.3}.}
Let us localize $\widetilde{\fl}^\alpha_*$ using the family $\{B_k\}$ of balls with radius $1$ satisfying $\|\sum_k\idct_{2B_k}\|_\infty<\infty$, where $2B_k$ denotes the ball of the same center as $B_k$'s and twice the radius of $B_k$. We claim that, for each $k$
\begin{align}\label{Eq:3.4}
|\{x\in B_k:\widetilde{\fl}^\alpha_*(\idct_{2B_k}f)(x)>\gamma\}|\lesssim {\|\idct_{2B_k}f\|_1\over \gamma},
\end{align}
for all non-negative $f\in L^1(\H^n)$. From the definition \eqref{Eq:3.2} of the kernel $\widetilde{K}^\alpha_t$, we see that, for $0<t\le1$ and $x\in B_k$, $\widetilde{K}^\alpha_t(d(x,\cdot))$ is supported in $B(x,1)\subset 2B_k$ for any $x\in\H^n$. Hence, if \eqref{Eq:3.4} holds, it implies that, for all $\gamma>0$,
\begin{align*}
\mbox{LHS of \eqref{Eq:3.3}}&\lesssim \sum_k|\{x\in B_k:\widetilde{\fl}^\alpha_*(\idct_{2B_k}f)(x)>\gamma\}\\
&\lesssim  {1\over\gamma}\sum_k\|\idct_{2B_k}f\|_1\\
&\lesssim {\|f\|_1\over\gamma}=\mbox{RHS of \eqref{Eq:3.3}},
\end{align*}
which proves the proposition.

\textbf{Proof of \eqref{Eq:3.4}.}
Now, let us fix a $k$ and prove the estimate \eqref{Eq:3.4}. Observe that \eqref{Eq:3.4} holds trivially for $\gamma\leq \|\idct_{2B_k}f\|_1/|B_k|$. Hence, in the following, we assume $\gamma>\|\idct_{2B_k}f\|_1/|B_k|$. Note that the function $\idct_{2B_k}f$ is local and $\H^n$ is local doubling, i.e., for all $x\in\H^n$ and $0<r\leq 4$ we have $|B(x,2r)|\lesssim |B(x,r)|$. Therefore we have the Calder\'{o}n-Zygmund decomposition for $\idct_{2B_k}f$ at height $\delta>\|\idct_{2B_k}f\|_1/|2B_k|$, that is, $\idct_{2B_k}f$ can be decomposed into $g+b$, where $0\leq g\lesssim \delta$ a.e. and $b=\sum_lb_l$ for
\begin{align}\label{Eq: bad_function}
b_l(x)=\left(f(x)-{1\over|Q_l|}\int_{Q_l}f(y)\,\d y\right)\idct_{Q_l}(x).
\end{align}
Here $\{Q_l\}$ is a family of balls contained in $4B_k$ and satisfying the following properties:
\begin{align}\label{Eq:3.5}
\delta\leq {1\over|Q_l|}\int_{Q_l}f(y)\,\d y\lesssim\delta,\quad\left\|\sum_l\idct_{2Q_l}\right\|_\infty<\infty,\quad\mbox{and}\quad\left|\bigcup_lQ_l\right|\lesssim {\|\idct_{2B_k}f\|_1\over\delta}.
\end{align}
For details see \cite[Corollary (2.3)]{CW71}.

It follows from \eqref{Eq:3.2} along with \eqref{Eq:2.1} that $\widetilde{K}^\alpha_t\in L^1(\H^n)$ for $\Re\alpha>0$ and $0<t\leq 1$. This, together with the fact that $0\leq g\lesssim\delta$, we have
\begin{align*}
\left|\left\{x\in B_k:\widetilde{\fl}^\alpha_*(g)(x)>\frac{\gamma}{2}\right\}\right|=0,
\end{align*}
by taking $\gamma=C\delta$ with $C$ (independent of $k$ and $f$) suitably large.

Hence, by using Chebyshev's inequality, we obtain
\begin{align}\label{Eq:3.6}
\mbox{LHS of \eqref{Eq:3.4}}\leq \left|\bigcup_l2Q_l\right|+{2\over\gamma}\sum_l\int_{B_k\setminus 2Q_l}\widetilde{\fl}^\alpha_*(b_l)(x)\,\d x,
\end{align}
where
\begin{align}\label{Eq:3.7}
\left|\bigcup_l2Q_l\right|\lesssim {\|\idct_{2B_k}f\|_1\over\gamma},
\end{align}
according to \eqref{Eq:3.5}.

Regarding the second term on the right-hand side of \eqref{Eq:3.6}, note that by \eqref{Eq: bad_function} $b_l$ have mean value $0$, i.e., $\int b_l=0$, and therefore
\begin{align*}
&\sum_l\int_{B_k\setminus 2Q_l}\widetilde{\fl}^\alpha_*(b_l)(x)\,\d x\\
\leq &\sum_l\int_{\H^n\setminus 2Q_l}\sum_{j=1}^\infty\left|\int_{Q_l}\widetilde{K}^\alpha_{2^{-j}}(d(x,y))\,b_l(y)\,\d y\right|\,\d x\\
\leq &\sum_l\int_{Q_l}
\Bigg(\sum_{j=1}^\infty
\underbrace{
\int_{\H^n\setminus 2Q_l}|\widetilde{K}^\alpha_{2^{-j}}(d(x,y))-\widetilde{K}^\alpha_{2^{-j}}(d(x,y_l))|\,\d x}_{\mathcal{I}_{j,l}(y)}
\Bigg) |b_l(y)|\,\d y
\end{align*}
where $y_l$ denotes the center of $Q_l$. We claim that, for each $l$,
\begin{align}\label{Eq:3.9}
\sum_{j=1}^\infty \mathcal{I}_{j,l}(y)=\mathrm{O}(1)
\qquad\forall y\in Q_l,
\end{align}
which implies that
\begin{align}\label{Eq:3.8}
\sum_l\int_{B_k\setminus 2Q_l}\widetilde{\fl}^\alpha_*(b_l)(x)\,\d x
\leq 
\sum_l\int_{Q_l}
\left(\sum_{j=1}^\infty \mathcal{I}_{j,l}(y)\right)
|b_l(y)|\,\d y
\lesssim
\|\idct_{2B_k}f\|_1.
\end{align}

We obtain \eqref{Eq:3.4} by substituting the estimates \eqref{Eq:3.7} and \eqref{Eq:3.8} back into \eqref{Eq:3.6}.

\textbf{Proof of \eqref{Eq:3.9}.}
The final step is to prove the claim \eqref{Eq:3.9} regarding the estimate of $\mathcal{I}_{j,l}$. First, note from the kernel expression \eqref{Eq:3.2} that if $2^jr_l>1$, where $r_l$ is the radius of $Q_l$, then
\begin{align*}
|\widetilde{K}^\alpha_{2^{-j}}(d(x,y))-\widetilde{K}^\alpha_{2^{-j}}(d(x,y_l))|=0\quad \mbox{for }x\in(2Q_l)^c,\ y\in Q_l.
\end{align*}
Hence, we can restrict the proof to the case where $2^jr_l\le1$ and show that, for each $l$,
\begin{align}\label{Eq:3.10}
\sum_{\lbrace{j\in\Z_+\mid2\leq 2^j\leq {r_l}^{-1}}\rbrace}
\mathcal{I}_{j,l}(y) =\mathrm{O}(1)
\qquad \forall y\in Q_l.
\end{align}

To this end, first observe that if $2^{-100}<2^jr_j\leq 1$, we deduce directly from \eqref{Eq:2.1} and \eqref{Eq:3.2} that
\begin{align*}
\mathcal{I}_{j,l}(y)
&\leq 2\int_0^{2^{-j}}2^{nj}r^{n-1}(1-2^jr)^{\Re\alpha-1}\,\d r\\
&= 2\int_0^{1} r^{n-1}(1-r)^{\Re\alpha-1}\,\d r,
\end{align*}
which is finite provided that $\Re\alpha>0$. 

Otherwise, if $2^jr_l\leq 2^{-100}$, we decompose the integral $\mathcal{I}_{j,l}(y)$ into 
\begin{align}\label{Eq:3.12}
\mathcal{I}_{j,l}(y) &=
\underbrace{\int_{E_{j,l}(y)}|\widetilde{K}^\alpha_{2^{-j}}(d(x,y))-\widetilde{K}^\alpha_{2^{-j}}(d(x,y_l))|\,\d x}_{\mathcal{J}_1}\notag\\
&\quad+\underbrace{\int_{\H^n\setminus (E_{j,l}(y)\cup2Q_l)}|\widetilde{K}^\alpha_{2^{-j}}(d(x,y))-\widetilde{K}^\alpha_{2^{-j}}(d(x,y_l))|\,\d x}_{\mathcal{J}_2}
\end{align}
where the set $E_{j,l}(y)$ is defined by 
\begin{align*}
E_{j,l}(y)=(B(y,2^{-j})\setminus B(y,2^{-j}-2r_l))\bigcup (B(y_l,2^{-j})\setminus B(y_l,2^{-j}-2r_l)).
\end{align*}

By using \eqref{Eq:2.1} and \eqref{Eq:3.2} again, we know, on the one hand, that
\begin{align}\label{Eq:3.13}
\mathcal{J}_1&\leq 2\sup_{y'\in Q_l}\int_{B(y',2^{-j})\setminus B(y',2^{-j}-3r_l)}|\widetilde{K}^\alpha_{2^{-j}}(d(x,y'))|\,\d x \notag\\
&\leq 2\int_{2^{-j}-3r_l}^{2^{-j}}2^{nj} r^{n-1}(1-2^jr)^{\Re\alpha-1}\,\d r\lesssim  (2^jr_l)^{\Re\alpha},
\end{align}
provided that $\Re\alpha>0$. On the other hand, we have
\begin{align*}
\mathcal{J}_2
&\leq \int_{\widetilde{Q}}|\widetilde{K}^\alpha_{2^{-j}}(d(x,y))-\widetilde{K}^\alpha_{2^{-j}}(d(x,y_l))|\,\d x\\
&\leq \int_0^{d(y_l,y)}\left(\int_{\widetilde{Q}}\left|\langle(\nabla\widetilde{K}^\alpha_{2^{-j}}(d(x,\cdot)))(\gamma_{y,y_l}(\theta)),\dot{\gamma}_{y,y_l}(\theta)\rangle_{\gamma_{y,y_l}(\theta)}\right|\,\d x\right)\d\theta
\end{align*}
where $\widetilde{Q}$ denotes the set $B(y_l,2^{-j}-2r_l)\setminus 2Q_l$. Here, we have used the mean value theorem, where $\nabla$ denotes the gradient induced by the canonical Riemannian metric $\langle\cdot,\cdot\rangle_x$ on $\H^n$, and $\gamma_{x,y}$ is the arc-length parameterized geodesic in $\H^n$ with $\gamma_{x,y}(0)=x$ and $\gamma_{x,y}(d(x,y))=y$.

By taking the supremum of $y'$ over $Q_l$, we finally obtain
\begin{align}\label{Eq:3.14}
\mathcal{J}_2
&\leq r_l \,\sup_{y'\in Q_l}\int_{B(y',2^{-j}-r_l)\setminus B(y',r_l)}\left|
\left(\widetilde{K}^\alpha_{2^{-j}}\right)'\!(d(x,y'))
\right|\d x\notag\\
&\lesssim 2^jr_l\int_{r_l}^{2^{-j}-r_l}2^{nj}r^{n-1}(1-2^jr)^{\Re\alpha-2}\,\d r\notag\\
&\lesssim\begin{cases}
2^jr_l & \mbox{for }\Re\alpha>1,\\
-2^jr_l\log(2^jr_l) & \mbox{for }\Re\alpha=1,\\
(2^jr_l)^{\Re\alpha} & \mbox{for }0<\Re\alpha<1,
\end{cases}
\end{align}
where $2^jr_l \le 2^{-100}$ as assumed. We obtain \eqref{Eq:3.10} by combining the estimates \eqref{Eq:3.12}, \eqref{Eq:3.13}, and \eqref{Eq:3.14}. The proof is now complete.
\end{proof}
\smallskip

We now arrive at the proof of Theorem \ref{thm: main alpha}, which relies on an interpolation argument based on the boundedness results of the operators $\fl^{\alpha}_*$ and $\fg^{\alpha}_*$ obtained above.

\begin{proof}[Proof of Theorem~\ref{thm: main alpha}]
On the one hand, using Marcinkiewicz's interpolation theorem, we deduce from Propositions \ref{prop:3.3} and \ref{prop:3.4} that the operator $\fl^{\alpha_0}_*$ is bounded on $L^{p_0}(\H^n)$ for $\alpha_0\in \C$ with $\Re\alpha_0>0$ and $1<p_0<2$. On the other hand, we know from \eqref{est: globalp} that the operator $\fg^{\alpha_0}_*$ is also bounded on $L^{p_0}(\H^n)$ for the same $\alpha_0$ and $p_0$. Therefore, we have
\begin{align}\label{est: alpha0}
\|\L^{\alpha_0}f\|_{p_0}\lesssim\|f\|_{p_0}
\qquad\textnormal{for $1<p_0<2$ and $\Re\alpha_0>0$}.
\end{align}

Similarly, by combining \eqref{est: global2} and Proposition \ref{prop:3.3}, we also obtain
\begin{align}\label{est: alpha1}
\|\L^{\alpha_1}f\|_{2}\lesssim\|f\|_{2}
\qquad\textnormal{for $\Re\alpha_1>\frac{1-n}{2}$}.
\end{align}

By applying interpolation to the analytic family of operators $\lbrace{\L^{\alpha}}\rbrace_{\alpha\in\C}$, we deduce from \eqref{est: alpha0} and \eqref{est: alpha1} that the operator $\L^{\alpha}$ is bounded on $L^{p}(\H^n)$ for $p_0\le p \le 2$ and 
\begin{align*}
\Re\alpha=\Re\alpha_0{\frac{1}{p}-\frac{1}{2}\over\frac{1}{p_0}-\frac{1}{2}}+\Re\alpha_1{\frac{1}{p_0}-\frac{1}{p}\over\frac{1}{p_0}-\frac{1}{2}},
\end{align*}
where $\Re\alpha_0>0$, $\Re\alpha_1>(1-n)/2$, and $1<p_0<2$ are arbitrary parameters. 

It further follows that $\L^{\alpha}$ is bounded on $L^{p}(\H^n)$ for $1<p\le 2$ and
\begin{align*}
\Re\alpha> 1-n + \frac{n-1}{p}.
\end{align*}

Also, note that Schur's test implies that $\L^{\alpha}$ is bounded on $L^\infty(\H^n)$ provided that $\Re\alpha>0$. We obtain, by using a similar interpolation argument, that the operator $\L^{\alpha}$ is bounded on $L^p(\H^n)$ for $2\le p< \infty$, provided that $\Re\alpha>(1-n)/p$. This completes the proof.
\end{proof}

This article concludes with the proof of Theorem \ref{thm: main}.
\begin{proof}[Proof of Theorem \ref{thm: main}]
Theorem \ref{thm: main} is a special case of Theorem \ref{thm: main alpha} with $\alpha=0$.
\end{proof}
\smallskip

\textbf{Acknowledgements.} The authors are greatly indebted to Lixin Yan for valuable discussions, and would also like to thank Amos Nevo for pointing out the exponentially-strong maximal inequality of the operator $\fg^0_*$ defined in Section \ref{Sec:3}. The authors are also grateful to the anonymous reviewers for their careful reading and helpful suggestions, which have improved the presentation of the paper. Y. Wang was supported by National Key R\&D Program of China 2022YFA1005700 and by NNSF of China (No. 12571111). H.-W. Zhang receives funding from the Deutsche Forschungsgemeinschaft (DFG) through SFB-TRR 358/1 2023 ---Project 491392403.


\bibliographystyle{amsplain-modified}
\bibliography{2025-WZ}

\vspace{20pt}\address{
\noindent\textsc{Yunxiang Wang:}
\href{mailto: wangyx93@mail2.sysu.edu.cn}{wangyx93@mail2.sysu.edu.cn}\\
\textsc{Department of Mathematics\\
Sun Yat-sen University\\
Guangzhou, 510275, P. R. China}}

\vspace{10pt}\address{
\noindent\textsc{Hong-Wei Zhang:}
\href{mailto:zhw.dimn@gmail.com}
{zhw.dimn@gmail.com}\\
\textsc{Institute for Advanced Study in Mathematics\\
Harbin Institute of Technology\\
Harbin, 150001, China}\\[5pt]
and\\[5pt]
\textsc{Institut f\"{u}r Mathematik\\
Universit\"{a}t Paderborn\\
Paderborn, 33098, Germany}}
\end{document}